\newlist{steps}{enumerate}{1}
\setlist[steps, 1]{label = Step \arabic*:}
\newcommand{\gd}{\Delta}
\newcommand{\tu}{\Tilde{u}}
\newcommand{\tv}{\Tilde{v}}
\newcommand{\tw}{\Tilde{w}}
\newcommand{\tg}{\Tilde{g}}
\newcommand{\inpt}[1]{\langle #1 \rangle}
\newcommand{\msa}{\mathcal{A}}
\newcommand{\gw}{\Omega}
\newcommand{\ga}{\gamma}
\newcommand{\gb}{\beta}
\newcommand{\gs}{\sigma}
\newcommand{\nb}{\nabla}
\newcommand{\vp}{\varphi}
\newcommand{\ve}{\varepsilon}
\newcommand{\pdr}{\partial}
\newcommand{\tup}{\textup}
\newcommand{\beq}{\begin{equation}}
\newcommand{\eeq}{\end{equation}}
\newcommand{\bea}{\begin{align}}
\newcommand{\eea}{\end{align}}
\newcommand{\bthm}{\begin{theorem}}
\newcommand{\ethm}{\end{theorem}}
\newcommand{\bpr}{\begin{proof}}
\newcommand{\epr}{\end{proof}}
\newcommand{\bcl}{\begin{corollary}}
\newcommand{\ecl}{\end{corollary}}
\newcommand{\bpn}{\begin{proposition}}
\newcommand{\epn}{\end{proposition}}
\newcommand{\bre}{\begin{remark}}
\newcommand{\ere}{\end{remark}}
\newcommand{\bdf}{\begin{definition}}
\newcommand{\edf}{\end{definition}}
\newcommand{\bss}{\begin{align*}}
\newcommand{\ess}{\end{align*}}
\newcommand{\bl}{\label}
\newtheorem{theorem}{Theorem}[section]
\newtheorem{corollary}[theorem]{Corollary}
\newtheorem{lemma}[theorem]{Lemma}
\newtheorem{proposition}[theorem]{Proposition}
\theoremstyle{definition}
\newtheorem{definition}[theorem]{Definition}
\theoremstyle{remark}
\newtheorem{remark}{Remark}
\numberwithin{equation}{section}
\date{September 7, 2019}                                            
\begin{document}

\title[Nonautonomous Hindmarsh-Rose Equations]{Global Dynamics of Nonautonomous Hindmarsh-Rose Equations}

\author[C. Phan]{Chi Phan}
\address{Department of Mathematics and Statistics, University of South Florida, Tampa, FL 33620, USA}
\email{chi@mail.usf.edu}
\thanks{}

\author[Y. You]{Yuncheng You}
\address{Department of Mathematics and Statistics, University of South Florida, Tampa, FL 33620, USA}
\email{you@mail.usf.edu}
\thanks{}


\subjclass[2000]{Primary: 35K57, 37L30, 37L55, 37N25; Secondary: 35B40, 35K55, 92C20}



\keywords{Nonautonomous Hindmarsh-Rose equations, global dynamics, pullback attractor, smoothing Lipschitz continuity, pullback exponential attractor}

\begin{abstract}
Global dynamics of nonautonomous diffusive Hindmarsh-Rose equations on a three-dimensional bounded domain in neurodynamics is investigated. The existence of a pullback attractor is proved through uniform estimates showing the pullback dissipative property and the pullback asymptotical compactness. Then the existence of pullback exponential attractor is also established by proving the smoothing Lipschitz continuity in a long run of the solution process.
\end{abstract}

\maketitle

\section{\textbf{Introduction}}

The Hindmarsh-Rose equations for neuronal bursting of the intracellular membrane potential observed in experiments was initially proposed in \cite{HR1, HR2}. The original model is composed of three ordinary differential equations and has been studied by numerical simulations and mathematical analysis, cf. \cite{HR1, HR2, IG, Ko, MFL, SC, SPH, Tr, Su, ZZL} and the references therein. The solutions of this model exhibit interesting bursting patterns, especially chaotic bursting and dynamics. 

Very recently, the authors in \cite{PYS} and \cite{Phan} proved the existence of global attractors for the diffusive and partly diffusive Hindmarsh-Rose equations as well as the existence of a random attractor for the stochastic Hindmarsh-Rose equations with multiplicative noise. 

In this work, we shall study  the global dynamics for the nonautonomous diffusive Hindmarsh-Rose equations with time-dependent external inputs:
\begin{align}
    \frac{\pdr u}{\pdr t} & = d_1 \gd u +  \vp (u) + v - w + J + p_1 (t, x)\, \bl{uq} \\
    \frac{\pdr v}{\pdr t} & = d_2 \gd v + \psi (u) - v + p_2 (t, x), \bl{vq} \\
    \frac{\pdr w}{\pdr t} & = d_3 \gd w + q (u - c) - rw + p_3 (t, x), \bl{wq}
\end{align}
for $t > \tau \in \mathbb{R},\; x \in \gw \subset \mathbb{R}^{n}$ ($n \leq 3$), where $\gw$ is a bounded domain with locally Lipschitz continuous boundary, the stimulation inject current $J $ is assumed to be a constant, and the nonlinear terms in \eqref{uq} and \eqref{vq} are
\beq \bl{pp}
	\vp (u) = au^2 - bu^3, \quad \text{and} \quad \psi (u) = \alpha - \beta u^2.
\eeq 
Assume that the external input terms $p_i \in L^2_{loc} (\mathbb{R}, L^2 (\gw)), i = 1, 2, 3$, satisfy the condition of translation boundedness \cite{CV},
\beq \bl{tbd}
	\|p_i \|^2_{L_b^2} = \sup_{t\, \in \,\mathbb{R}}\, \int_t^{t+1} \int_\gw |p_i (t, x)|^2 \, dx\, ds < \infty, \quad i = 1, 2, 3.
\eeq

All the involved parameters $d_1, d_2, d_3, a, b, \ga, \gb, q, r$, and $J$ are positive constants except $c \,(= u_R) \in \mathbb{R}$, which is a reference value of the membrane potential of a neuron cell. We impose the homogeneous Neumann boundary conditions 
\begin{equation} \label{nbc}
    \frac{\pdr u}{\pdr \nu} (t, x) = 0, \; \; \frac{\pdr v}{\pdr \nu} (t, x)= 0, \; \; \frac{\pdr w}{\pdr \nu} (t, x)= 0,\quad  t > \tau \in \mathbb{R},  \; x \in \partial \gw ,
\end{equation}
and the initial conditions to be specified are denoted by
\begin{equation} \bl{inc}
    u(\tau, x) = u_\tau (x), \; v(\tau, x) = v_\tau (x), \; w(\tau, x) = w_\tau (x), \quad x \in \gw.
\end{equation}

In this system \eqref{uq}-\eqref{wq}, the variable $u(t,x)$ refers to the membrane electric potential of a neuronal cell, the variable $v(t, x)$ represents the transport rate of the ions of sodium and potassium through the fast ion channels and is called the spiking variable, while the variables $w(t, x)$ represents the transport rate across the neuronal cell membrane through slow channels of calcium and other ions correlated to the bursting phenomenon and is called the bursting variable. 

We start with formulation of the aforementioned initial-boundary value problem of \eqref{uq}--\eqref{inc}. Define the Hilbert spaces $H = [L^2 (\gw)]^3 = L^2 (\gw, \mathbb{R}^3)$ and  $E =  [H^{1}(\gw)]^3 = H^1 (\gw, \mathbb{R}^3)$. The norm and inner-product of $H$ or $L^2 (\gw)$ will be denoted by $\| \, \cdot \, \|$ and $\inpt{\,\cdot , \cdot\,}$, respectively. The norm of $E$ will be denoted by $\| \, \cdot \, \|_E$. The norm of $L^p (\gw)$ or $L^p (\gw, \mathbb{R}^3)$ will be dented by $\| \cdot \|_{L^p}$ if $p \neq 2$. We use $| \, \cdot \, |$ to denote a vector norm in a Euclidean space.

The nonautonomous system \eqref{uq}-\eqref{wq} with the initial-boundary conditions \eqref{nbc} and \eqref{inc} can be written in the vector form
\beq \bl{napb}
    \begin{split}
    \frac{\partial g}{\partial t} = Ag &+ f(g) + p(t,x), \quad t > \tau \in \mathbb{R},  \\[3pt]
     &g(\tau) = g_\tau,
   \end{split}
\eeq
where 
$$
	g(t) = \textup{col} \, (u(t, \cdot), v(t, \cdot), w(t, \cdot)), \quad  g_\tau = \textup{col}\, (u_\tau,  v_\tau,  w_\tau),
$$
and $p(t, x) = \textup{col}\, (p_1 (t,x), p_2 (t,x), p(t,x))$, the nonpositive self-adjoint operator
\begin{equation} \label{opA}
        A =
        \begin{pmatrix}
            d_1 \gd  & 0   & 0 \\[3pt]
            0 & d_2 \gd  & 0 \\[3pt]
            0 & 0 & d_3 \gd
        \end{pmatrix}
        : D(A) \rightarrow H,
\end{equation}
where 
$$
	D(A) = \{g \in H^2(\gw, \mathbb{R}^3): \pdr g /\pdr \nu = 0 \},
$$ 
is the generator of an analytic $C_0$-semigroup $\{e^{At}\}_{t \geq 0}$ on the Hilbert space $H$ \cite{SY}. By the fact that $H^{1}(\gw) \hookrightarrow L^6(\gw)$ is a continuous imbedding for space dimension $n \leq 3$ and by the H\"{o}lder inequality, there is a constant $C_0 > 0$ such that 
$$
    \| \vp (u)  \| \leq C_0 \| u \|_{L^6}^3 \quad \tup{and} \quad \|\psi (u) \| \leq C_0 \| u \|_{L^4}^2 \quad \textup{for} \; u \in L^6 (\gw).
$$
Therefore, the nonlinear mapping 
\begin{equation} \label{opf}
    f(u,v, w) =
        \begin{pmatrix}
             \vp (u) + v - w + J \\[4pt]
            \psi (u) - v,  \\[4pt]
	     q (u - c) - rw
        \end{pmatrix}
        : E \longrightarrow H
\end{equation}
is a locally Lipschitz continuous mapping. 

\subsection{\textbf{Hindmarsh-Rose Models in Neurodynamics}}

In 1982-1984, Hindmarsh and Rose developed the mathematical model \cite{HR1, HR2} to describe neuronal dynamics
\begin{equation} \label{HR}
	\begin{split}
    \frac{du}{dt} & = au^2 - bu^3 + v - w + J,  \\
    \frac{dv}{dt} & = \alpha - \beta u^2  - v,  \\
    \frac{dw}{dt} & =  q (u - u_R) - rw.
    \end{split}
\end{equation}
This model characterizes the phenomena of synaptic bursting and especially chaotic bursting. Neuronal signals are short electrical pulses called spike or action potential. Bursting shows alternating phases of rapid firing spikes and then quiescence. It is a mechanism to modulate brain functionalities and to communicate signals with the neighbor neurons. Bursting patterns occur in a variety of bio-systems such as pituitary melanotropic gland, thalamic neurons, respiratory pacemaker neurons, and insulin-secreting pancreatic $\beta$-cells, cf. \cite{BRS, CK,CS, HR2}.

The mathematical analysis mainly using bifurcations together with numerical simulations of several models in ODEs on bursting behavior has been done by many authors, cf. \cite{BB, ET, EI, MFL, Ri, SPH, Tr, WS, Su}. 

Neurons burst through synaptic coupling or diffusive coupling. Synaptic coupling has to reach certain threshold for release of quantal vesicles and synchronization \cite{DJ, Ru, Rv, SC}.  

It is known that Hodgkin-Huxley equations \cite{HH} (1952) provided a four-dimensional model for the dynamics of membrane potential taking into account of the sodium, potassium and leak ions current. It is a highly nonlinear system if without simplification assumptions. The FitzHugh-Nagumo equations \cite{FH} (1961-1962) is a two-dimensional model for an excitable neuron with the membrane potential and the current variable in a lump. This 2D model admits an exquisite phase plane analysis, but it excludes any chaotic solutions and chaotic dynamics so that no chaotic bursting can be generated by the FitzHugh-Nagumo equations. 

In contrast, the Hindmarsh-Rose equations contribute a three-dimensional model with cubic nonlinearity to generate a significant mechanism for rapid firing and busting in the research of neurodynamics. The chaotic coupling exhibited in the simulations and analysis of this Hindmarsh-Rose model shows more rapid synchronization and more effective regularization of neurons due to \emph{lower threshold} than the synaptic coupling \cite{IG, Tr, Rv, SK, SPH, Su}. The research on this Hindmarsh-Rose model also indicated \cite{SC} that it allows for spikes with varying interspike-interval. Therefore, this 3D model is a suitable choice for the investigation of both the regular bursting and the chaotic bursting when the parameters vary. 

In general, neurons are immersed in aqueous biochemical solutions consisting of different ions electrically charged. The axon of a neuron is a long branch to propagate signals and the neuron cell membrane is the conductor along which the voltage signals travel. As pointed out in \cite{EI}, neuron is a distributed dynamical system. 

From physical and mathematical point of view, it is reasonable and useful to consider the diffusive Hindmarsh-Rose model in terms of partial differential equations with the spatial variables $x$ involved, at least in $\mathbb{R}^1$.  Here in the abstract extent, we shall study the diffusive Hindmarsh-Rose equations \eqref{uq}-\eqref{wq} with time-dependent external stimulations in a bounded domain of space $\mathbb{R}^3$ and we shall focus on the global dynamics of the solution processes. The chaotic bursting and dynamical properties from the nonautonomous diffusive Hindmarsh-Rose equations  are expected to demonstrate a wide range of applications in neuroscience. 

\subsection{\textbf{Preliminaries}}

In this work we shall consider the weak solutions of this initial value problem \eqref{napb}. 
\begin{definition} \label{D:wksn}
	A function $g(t, x), (t, x) \in [\tau, T] \times \gw$, is called a weak solution to the initial value problem \eqref{napb}, if the following conditions are satisfied:
	
	\textup{(i)} $\frac{d}{dt} (g, \zeta) = (Ag, \zeta) + (f(g), \zeta)$ is satisfied for a.e. $t \in [\tau, T]$ and for any $\zeta \in E$.
	
	\textup{(ii)} $g(t, \cdot) \in C ([\tau, T]; H) \cap L^2 ([\tau, T], E)$ and $g(\tau ) = g_0$.
	
\noindent
Here $(\cdot , \cdot)$ stands for the dual product of the dual space $E^*$ and $E$.
\end{definition}

\begin{lemma} \label{Lwn}
	For any given initial data $g_0 \in H$, there exists a unique local weak solution $g(t, g_0) = (u(t), v(t), w(t)), \, t \in [\tau, T]$ for some $T > 0$, of the initial value problem \eqref{napb}, such that
\begin{equation} \label{soln}
    g \in C([\tau, T_{max}); H) \cap L_{loc}^2 ([\tau, T_{max}); E),
\end{equation}
where $I_{max} = [\tau, T_{max})$ is the maximal interval of existence. And the weak solution becomes a strong solution on $(\tau, T_{max})$, which satisfies the evolutionary equation \eqref{napb} in $H$ almost everywhere and with the regularity 
\beq \bl{Strs}
    g \in C((\tau, T_{max}); E) \cap C^1 ((\tau, T_{max}); H) \cap L_{loc}^2 ((\tau, T_{max}); H^2 (\gw, \mathbb{R}^3)).
\eeq
\end{lemma}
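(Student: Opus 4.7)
The plan is to treat \eqref{napb} as a semilinear parabolic Cauchy problem driven by the analytic $C_0$-semigroup $\{e^{At}\}_{t\ge 0}$ on $H$, with the locally Lipschitz nonlinearity $f:E\to H$ already established in \eqref{opf} and a forcing $p\in L^2_{\mathrm{loc}}(\mb R,H)$ controlled by \eqref{tbd}. The argument follows the standard mild-solution/analytic-semigroup framework (cf.\ \cite{SY}).

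First I would write the mild formulation
\[
g(t) = e^{A(t-\tau)}g_\tau + \int_\tau^t e^{A(t-s)}\bigl[f(g(s)) + p(s,\cdot)\bigr]\,ds,
\]
and set up a Banach contraction on a closed ball $\mc B_{R,T}$ in the weighted space
\[
X_T = \Bigl\{g\in C([\tau,\tau+T];H):\ \sup_{s\in(\tau,\tau+T]}(s-\tau)^{1/2}\|g(s)\|_E < \infty\Bigr\}
\]
centered at the free evolution $t\mapsto e^{A(t-\tau)}g_\tau$. The analytic-semigroup estimates $\|e^{At}x\|\le M\|x\|$ and $\|e^{At}x\|_E\le Mt^{-1/2}\|x\|$ for $t>0$, together with the integrable singularity of $t^{-1/2}$ and the local Lipschitz bound $\|f(g_1)-f(g_2)\|\le L(R)\|g_1-g_2\|_E$ (which uses $H^1(\gw)\hookrightarrow L^6(\gw)$ applied to the cubic $\vp(u)$ and the quadratic $\psi(u)$), yield a strict contraction on $\mc B_{R,T}$ once $T$ is small depending on $R$ and $\|g_\tau\|$. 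Banach's fixed point theorem then produces a unique local weak solution in the sense of Definition~\ref{D:wksn}, and standard continuation gives the maximal interval $[\tau,T_{\max})$ together with the class \eqref{soln}.

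For uniqueness I would subtract two solutions $g_1,g_2$ sharing the initial datum, test the difference equation against $h=g_1-g_2$ in $H$, use the dissipativity $\inpt{Ah,h}\le 0$ and the local Lipschitz estimate of $f$ on a ball containing both trajectories, and absorb the $E$-norm appearing on the right-hand side into the $A$-term via Young's inequality, obtaining $\tfrac{d}{dt}\|h\|^2 \le C(R)\|h\|^2$; Gronwall forces $h\equiv 0$. For the strong-solution regularity \eqref{Strs}, analyticity of $\{e^{At}\}$ promotes the local H\"older regularity of $s\mapsto f(g(s))+p(s)$ into improved regularity of the convolution, placing $g(t)\in D(A)$ for a.e.\ $t\in(\tau,T_{\max})$ and $g\in C((\tau,T_{\max});E)\cap C^1((\tau,T_{\max});H)$ by the classical parabolic regularization theorem.

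The main obstacle is calibrating the fixed-point space: because $f$ sends $E\to H$ (not $H\to H$) and the initial datum lies only in $H$, one must work with a weighted norm that captures the $(t-\tau)^{-1/2}$ smoothing of $e^{At}$ from $H$ into $E$, and then balance this singular weight against the cubic growth of $\vp(u)$ so that the nonlinear contribution is contractive on a ball in $X_T$. Once this local Lipschitz control through the smoothing factor is properly set up, uniqueness and the strong-solution promotion are routine analytic-semigroup arguments.
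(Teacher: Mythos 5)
There is a genuine gap in your contraction step, and it is not a technicality: the cubic nonlinearity in three space dimensions is \emph{supercritical} with respect to $L^2$ initial data, so the weighted fixed-point scheme you set up does not close. Concretely, on your ball $\mathcal{B}_{R,T}\subset X_T$ you only control $\|g(s)\|_E\leq R\,(s-\tau)^{-1/2}$, so the cubic term gives $\|f(g(s))\|\lesssim \|u(s)\|_{L^6}^3\lesssim R^3 (s-\tau)^{-3/2}$, and likewise the local Lipschitz ``constant'' of $f$ along the ball degenerates like $R^2(s-\tau)^{-1}$. The Duhamel integral you must bound is then of the form $\int_\tau^t (t-s)^{-1/2}(s-\tau)^{-3/2}\,ds$, which diverges (the Beta-function condition $\alpha p<1$ fails here since $\alpha=\tfrac12$, $p=3$); no choice of intermediate space repairs this, because the scaling-critical space for $u_t=\Delta u\pm u^3$ in $\mathbb{R}^3$ is $H^{1/2}$ (or $L^3$), strictly smaller than $L^2$. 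For the same reason your uniqueness argument is incomplete as stated: a bound $\|f(g_1)-f(g_2)\|\leq L(R)\|g_1-g_2\|_E$ ``on a ball containing both trajectories'' presupposes a uniform $E$-bound that weak solutions with $H$ data do not have near $t=\tau$. Note also that even if the contraction closed, membership in your weighted space gives only $\|g(s)\|_E\lesssim (s-\tau)^{-1/2}$, which just fails to be in $L^2$ near $s=\tau$, so the asserted class \eqref{soln} would still not follow.

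What rescues the lemma is precisely the structure that a pure fixed-point argument ignores: the good sign of $-bu^3$ in $\vp$. The paper's (cited) route is the Galerkin/compactness scheme of Chepyzhov--Vishik: testing the approximate equations against $(c_1u,v,w)$ produces the a priori bound in $L^\infty(\tau,T;H)\cap L^2(\tau,T;E)\cap L^4(\tau,T;L^4(\gw))$ exactly because $\int_\gw(au^3-bu^4)\,dx$ is bounded above, and weak/weak$^*$ compactness then yields existence of a weak solution in the class \eqref{soln}. Uniqueness does not use a Lipschitz bound of $f$ from $E$ to $H$ at all, but the pointwise inequality $\langle\vp(u)-\vp(\tilde u),u-\tilde u\rangle\leq\int_\gw(u-\tilde u)^2\bigl[a(u+\tilde u)-b(u^2+u\tilde u+\tilde u^2)\bigr]dx\leq C\|u-\tilde u\|^2$, which is exactly the computation the paper carries out later in \eqref{Uq}--\eqref{piq}; Gronwall in $H$ then closes without ever invoking $E$-norms of the trajectories. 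Your analytic-semigroup machinery is the right tool for the \emph{parabolic smoothing} claim \eqref{Strs} once the solution is known to lie in $E$ for a.e.\ $t>\tau$, but for existence and uniqueness from $H$ data you need the energy/monotonicity argument, not a contraction.
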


\begin{proof}
The proof of the local existence and uniqueness of weak solutions is made by \emph{a priori} estimates on the Galerkin approximate solutions obtained by spectral projections of the initial value problem \eqref{napb}, similar to what we shall present in Section 3, and then by the Lions-Magenes type of weak and weak$^*$ compactness and convergence argument. It is an adaptation of the treatment for local solutions of the generic reaction-diffusion system in \cite[Chapter XV, Theorem 3.1 and Proposition 3.1]{CV}. The details are omitted here.
\end{proof}

The Gagliardo-Nirenberg inequalities \cite[Appendix B]{SY} of interpolation shown below will be used in several sharp estimates of this work,
\beq \label{GN}
	\| y \|_{W^{k, p} (\gw)} \leq C \| y \|^\theta_{W^{m, q} (\gw)}\,  \| y \|^{1 - \theta}_{L^r (\gw)}, \quad \text {for all} \;\; y \in W^{m, q} (\gw),
\eeq
where $C > 0$ is a constant, provided that $p, q, r \geq 1, 0 < \theta < 1$, and
$$
	k - \frac{n}{p}\,  \leq \, \theta \left(m - \frac{n}{q}\right)  - (1 - \theta)\, \frac{n}{r},  \quad n = \text{dim}\, (\gw).
$$

In Section 2, we shall recall the basic concepts and the relevant existing results on the topics of global dynamics for nonautonomous dynamical systems. In Section 3, we prove the existence of a pullback attractor for the solution process of the nonautonomous Hindmarsh-Rose equations. In Section 4, the existence of pullback exponential attractors will be proved for this nonautonomous Hindmarsh-Rose process. 

\section{\textbf{Pullback Attractor and Pullback Exponential Attractor}}	

\vspace{2pt}
We refer to \cite{CCLF, CLR1, CLR2, CE1, CE2, EYY, Kl, LMR, LWZ, ZZX} for the concepts and some of the existing results in the theory of nonautonomous dynamical systems, especially on the topics of pullback attractors and pullback exponential attractors. Recall that these concepts are rooted in the theory of global attractors and other invariant attracting sets for the autonomous infinite-dimensional dynamical systems \cite{CV, Milani, Rb, SY, Tm, Y08, Y10, Y12} and the theory of exponential attractors or sometimes called inertial sets \cite{Eden, EMZ, Li, Milani, Yagi}.

Let $X$ be a Banach space and suppose that a nonautonomous partial differential equations with initial-boundary conditions, which usually involves a time-dependent forcing term, has global solutions in space-time. Then the solution operator
$$
	\{S(t, \tau): X \to X\}_{t \geq \tau \in \mathbb{R}}
$$
is called a \emph{nonautonomous process} \cite{CLR2, CV}), which satisfies the three conditions:

1 ) $S(\tau, \tau) = I $ (the identity) for any $\tau \in \mathbb{R}$.

2) The cocycle property is satisfied: 
$$
	S(t, s) S(s, \tau) = S(t, \tau) \quad  \text{for any} \;\; -\infty < \tau \leq s \leq t < \infty.
$$

3) The mapping $(t, \tau, g) \to S(t, \tau) g \in X$ is continuous with respect to $(t, \tau, g) \in \mathcal{T} \times X$ for any given $\tau \in \mathbb{R}$, where $\mathcal{T} = \{(t, \tau) \in \mathbb{R}^2: t \geq \tau\}$.

\begin{definition}[Nonautonomous semiflow] \bl{nas}
	A mapping $\Phi (t, \tau, g): \mathbb{R}^+ \times \mathbb{R} \times X \to X$ is called a \emph{nonautonomous semiflow} (or called nonautonomous dynamical system) on a Banach space $X$ over $\mathbb{R}$, if the following conditions are satisfied:
	
	1) $\Phi (0, \tau, \cdot)$ is the identity on $X$, for any $\tau \in \mathbb{R}$.
	
	2) $\Phi (t + s, \tau, \cdot) = \Phi (t, \tau + s, \Phi(s, \tau, \cdot))$, for any $t, s \geq 0$ and $\tau \in \mathbb{R}$.
	
	3) $\Phi (t, \tau, g): \mathcal{T} \times X \to X$ is continuous.
\end{definition}

If $\{S(t, \tau): X \to X\}_{(t, \tau) \in \mathcal{T}}$ is a continuous evolution process on $X$, then it generates a nonautonomous semiflow defined by 
\beq \bl{PhiS}
	\Phi (t, \tau, g) = S(t + \tau, \tau, g),  \quad (t, \tau, g) \in \mathcal{T} \times X.
\eeq
This relation in the pullback sense is the following important identity
\beq \bl{PSR}
	\Phi (t, \tau - t, g) = S(\tau, \tau - t)g, \quad (t, \tau, g) \in \mathbb{R}^+ \times \mathbb{R} \times X.
\eeq

\begin{definition}[Pullback Attractor] \bl{PA}
      A time-parametrized set $\mathcal{A} = \{\mathcal{A}(\tau)\}_{\tau \in \mathbb{R}}$ in a Banach space $X$ is called a pullback attractor for the nonautonomous semiflow $\{\Phi (t, \tau, \cdot)\}_{(t, \tau) \in \mathcal{T}}$ generated by a continuous evolution process $\{S(t, \tau): X \to X\}_{(t, \tau) \in \mathcal{T}}$, if the following conditions are satisfied:
      
      1) $\mathcal{A}$ is compact in the sense that for each $\tau \in \mathbb{R}$ the set $\mathcal{A}(\tau)$ is compact in $X$.
      
      2) $\mathcal{A}$ is invariant,
$$
	S (t, \tau) \, \mathcal{A} (\tau) = \mathcal{A}(t), \quad t \geq 0, \; \tau \in \mathbb{R}.
$$
it is equivalent to $\Phi (t, \tau, \mathcal{A}(\tau))  = \mathcal{A} (t + \tau)$ for $t \geq \tau$.

      3) $\mathcal{A}$ pullback attracts every bounded set $B \subset X$ with respect to the semi-Hausdorff distance,
$$
	\lim_{t \to \infty} dist_X (\Phi (t, \, \tau - t, B), \, \mathcal{A} (\tau)) = \lim_{t \to \infty} dist_X (S (\tau, \, \tau - t) B, \, \mathcal{A} (\tau)) = 0.
$$
\end{definition}

\begin{definition}[Pullback Exponential Attractor] \label{PEA}
      A time-parametrized set $\mathscr{M} = \{\mathscr{M} (t)\}_{t \in \mathbb{R}} \subset X$, where $X$ is a Banach space, is called a pullback exponential attractor of a continuous evolution process $\{S(t, \tau)\}_{t \geq \tau \in \mathbb{R}}$ on $X$, if the following conditions are satisfied:
      
     1) For any $t \in \mathbb{R}$, the set $\mathscr{M} (t)$ is a compact and positively invariant sel in $X$ with respect to this process,
$$
	S(t, \tau) \mathscr{M}(\tau) \subset \mathscr{M}(t) \quad \text{for any}\;\; \infty < \tau \leq t < \infty.
$$

     2) The fractal dimension $\text{dim}_{F} \mathscr{M}(t)$ for all $t \in \mathbb{R}$ is finite and
$$
     	\sup_{t \in \mathbb{R}} \, \text{dim}_{F} \mathscr{M}(t) < \infty.
$$

     3) $\mathscr{M} = \{\mathscr{M} (t)\}_{t \in \mathbb{R}}$ exponentially attracts every bounded set $B \subset X$ in the sense that there exists a constant rate $\sigma > 0$, a constant $T_B > 0$ depending on $B$, and a positive function $C(\|B\|, T_B)$ where $\|B\| = \sup_{x \in B} \|x\|$, such that
$$
       	\text{dist}_X (S(\tau, \tau - t)B, \mathscr{M}(\tau)) \leq C(\|B\|, T_B) e^{-\sigma (t - \tau)} \quad \text{for any} \; t > T_B, \; \tau \in \mathbb{R}.    
$$
\end{definition}

Below we present two existing results on the sufficient conditions for the existence of pullback attractor and for the existence of pullback exponential attractor, respectively.

\begin{proposition}\cite{CCLF, CLR1, CLR2, Kl} \bl{PA}
      A nonautonomous process $\{S(t, \tau)\}_{t \geq \tau \in \mathbb{R}}$ on a Banach space $X$ has a unique pullback attractor $\msa = \{\msa (\tau)\}_{\tau \in \mathbb{R}}$, if the following two conditions are satisfied\textup{:}
	
 \textup{(i)} There is a pullback absorbing set $M$ in $X$, which means that for any given bounded set $B \subset X$, there is a finite time $T_B > 0$ such that 
 \beq \bl{pbab}
 	S(\tau, \tau - t)B \subset M, \quad \textup{for all} \;\; t > T_B.
 \eeq
 
 \textup{(ii)} The nonautonomous process $S(t, \tau)$ is pullback asymptotically compact in the sense that for any sequences $t_k \to \infty$ and $\{x_k\} \subset B$, where $B$ is any given bounded set in $X$,
the sequence $\{ S(\tau, \tau - t_k)  x_k)\}_{k=1}^\infty$ has a convergent subsequence. Moreover, the pullback attractor is given by
\beq \bl{pbat}
	\msa (\tau) = \bigcap_{s \geq 0} \, \overline{\bigcup_{t \geq s} S (\tau, \tau - t) M}.
\eeq	
\end{proposition}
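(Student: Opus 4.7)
The plan is to construct the candidate attractor directly by the formula \eqref{pbat}, i.e., to set
$$
\mathcal{A}(\tau) \;=\; \bigcap_{s \geq 0}\, \overline{\bigcup_{t \geq s} S(\tau, \tau-t) M},
$$
and then to verify sequentially the three properties in Definition \ref{PA}, together with uniqueness. The first step is to produce the useful sequential characterization: $y \in \mathcal{A}(\tau)$ iff there exist $t_k \to \infty$ and $x_k \in M$ with $S(\tau, \tau - t_k) x_k \to y$ in $X$. Nonemptiness of $\mathcal{A}(\tau)$ then follows immediately by applying the pullback asymptotical compactness (ii) to any such sequence. For compactness of $\mathcal{A}(\tau)$ I would take an arbitrary sequence $\{y_n\} \subset \mathcal{A}(\tau)$, approximate each $y_n$ within $1/n$ by some $S(\tau, \tau - s_n) w_n$ with $s_n \geq n$ and $w_n \in M$, extract via (ii) a convergent subsequence of $\{S(\tau, \tau - s_n) w_n\}$, and conclude that its limit lies in $\mathcal{A}(\tau)$ by the same characterization, with closedness inherited from $\mathcal{A}(\tau)$ being an intersection of closed sets.

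The second step is invariance, $S(t,\tau)\mathcal{A}(\tau) = \mathcal{A}(t)$ for $t \geq \tau$, which is the most delicate part. For the forward inclusion I would take $y = \lim_k S(\tau, \tau-t_k) x_k \in \mathcal{A}(\tau)$, use continuity of $S(t,\tau,\cdot)$ and the cocycle property to write $S(t,\tau) y = \lim_k S(t,\tau - t_k) x_k$, and rewrite $\tau - t_k = t - (t - \tau + t_k)$ with $t - \tau + t_k \to \infty$ to place the limit in $\mathcal{A}(t)$. For the reverse inclusion I would fix $z = \lim_k S(t, t - t_k) x_k \in \mathcal{A}(t)$, introduce the preimage candidates $y_k = S(\tau, t - t_k) x_k = S(\tau, \tau - (t_k - t + \tau)) x_k$, invoke (ii) at time $\tau$ to extract a subsequential limit $y \in \mathcal{A}(\tau)$, and pass to the limit in the cocycle identity $S(t, \tau) y_k = S(t, t - t_k) x_k \to z$ to conclude $z = S(t,\tau) y$. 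This last backward-preimage construction is where I expect the main obstacle to lie, since it requires the compactness hypothesis together with continuity of the process applied along the extracted limit.

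The third step is the pullback attraction property. Suppose, for contradiction, that there is a bounded $B \subset X$, a number $\varepsilon > 0$, sequences $t_k \to \infty$ and $x_k \in B$ with $\dist_X(S(\tau, \tau-t_k) x_k, \mathcal{A}(\tau)) \geq \varepsilon$. Condition (ii) extracts a subsequence with $S(\tau, \tau - t_k) x_k \to y^\ast \in X$. Using the pullback absorbing property (i), for any fixed $\sigma > T_B$ and $t_k > \sigma$, the cocycle decomposition $S(\tau, \tau - t_k) x_k = S(\tau, \tau - \sigma)\, S(\tau - \sigma, \tau - t_k) x_k$ shows that $S(\tau - \sigma, \tau - t_k) x_k \in M$, and then the sequential characterization forces $y^\ast \in \mathcal{A}(\tau)$, contradicting $\dist_X(S(\tau,\tau-t_k)x_k, \mathcal{A}(\tau)) \geq \varepsilon$.

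Finally, uniqueness is a routine consequence of the two attractor axioms: if $\mathcal{A}'$ is another pullback attractor, then compactness of $\mathcal{A}'(\tau - t)$ together with invariance $\mathcal{A}'(\tau) = S(\tau, \tau - t)\mathcal{A}'(\tau - t)$ and pullback attraction of $\mathcal{A}$ give $\mathcal{A}'(\tau) \subseteq \mathcal{A}(\tau)$, and swapping the roles yields the reverse inclusion. Overall, the argument is a direct adaptation to the nonautonomous pullback framework of the classical $\omega$-limit construction of global attractors; the single genuine subtlety is the preimage extraction in the negative direction of the invariance proof.
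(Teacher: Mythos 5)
The paper offers no proof of this proposition: it is quoted from the cited references \cite{CCLF, CLR1, CLR2, Kl}, so there is no in-paper argument to compare against. Your construction is the standard pullback $\omega$-limit argument used in those references, and it is essentially correct: the sequential characterization of $\mathcal{A}(\tau)$, the compactness proof, both directions of invariance (including the backward preimage extraction via condition (ii), which you correctly identify as the delicate step), and the contradiction argument for attraction are all sound. Note that you implicitly use that $M$ is bounded, both to apply (ii) to sequences in $M$ and to absorb $M$ into itself; this is intended in the proposition but should be stated.

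Two points need tightening. First, in the attraction step the decomposition $S(\tau,\tau-t_k)x_k = S(\tau,\tau-\sigma)\,S(\tau-\sigma,\tau-t_k)x_k$ places $S(\tau-\sigma,\tau-t_k)x_k$ in $M$ only when the elapsed pullback time $t_k-\sigma$ exceeds $T_B$; your hypotheses ``$\sigma>T_B$ and $t_k>\sigma$'' do not guarantee this, and you should require $t_k>\sigma+T_B$ (harmless since $t_k\to\infty$). Moreover, to conclude $y^*\in\mathcal{A}(\tau)=\bigcap_{s\ge 0}\overline{\bigcup_{t\ge s}S(\tau,\tau-t)M}$ you must either let $\sigma$ range over all $s\ge 0$ or take $\sigma_k\to\infty$ with $t_k-\sigma_k>T_B$, rather than work with a single fixed $\sigma$. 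Second, the uniqueness argument applies the pullback attraction of $\mathcal{A}$ to the sets $\mathcal{A}'(\tau-t)$, which vary with $t$, whereas attraction is only asserted for a fixed bounded set; the argument therefore requires $\bigcup_{t\ge 0}\mathcal{A}'(\tau-t)$ to be bounded. This holds for the family you construct (since $\mathcal{A}(\tau)\subset\overline{M}$ for every $\tau$), and uniqueness in the cited references is indeed understood within the class of invariant attracting families with uniformly bounded backward sections; without that restriction the uniqueness claim is genuinely delicate. Neither point undermines the approach, but both should be repaired in a complete write-up.
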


\begin{proposition} \cite{CE1, CE2} \bl{PeA}
	Let $ X$ and $Y$ be Banach spaces and $Y$ compactly embedded in $X$. Assume that $\{S(t, \tau) \in \mathcal{L} (X) \cap \mathcal{L} (Y): t \geq \tau \in \mathbb{R}\}$ be a nonautonomous process such that the following three conditions are satisfied\textup{:}
	
	\textup{1)} There exists a bounded pullback absorbing set $M^* \subset Y$ uniformly in time in the sense that, for any bounded set $B \subset X$, there is a finite time $T_B > 0$ such that 
\beq \bl{ab}
	\bigcup_{\tau \in \mathbb{R}} S(\tau, \tau - t)B \subset M^*, \quad \textup{for all} \;\;  t > T_B.
\eeq	
	
	\textup{2)} The smoothing Lipschitz continuity is satisfied\textup{:} There is a constant $\kappa > 0$ such that for the aforementioned bounded pullback absorbing set $M^* \subset Y$,
\beq \bl{smL}
	\sup_{\tau \in \mathbb{R}} \|S(\tau, \tau - T_{M^*}) g_1 - S(\tau, \tau - T_{M^*}) g_2 \|_Y \leq \kappa \|g_1 - g_2\|_X, \;\; \text{for any} \;\; g_1, g_2 \in M^*.
\eeq

	\textup{3)} The H\"{o}lder/Lipschitz continuity in time is satisfied\textup{:} There exist two exponents $\gamma_1, \gamma_2 \in (0, 1]$ such that for the aforementioned set $M^* \subset Y$, \begin{align} 
	\sup_{\tau \in \mathbb{R}} &\, \|S(\tau, \tau - T_{M^*}) g - S(\tau, \tau - T_{M^*} - t) g \|_X \leq c_1 |t|^{\gamma_1}, \;\, t \in [0, T_{M^*}], \, g \in M^*, \bl{Lpt1}   \\
	\sup_{\tau \in \mathbb{R}} &\, \|S(\tau, \tau - t_1) g - S(\tau, \tau - t_2) g \|_X \leq c_2 |t_1 - t_2|^{\gamma_2}, \; \,t_1, t_2 \in [T_{M^*},  2T_{M^*}], \,g \in M^*.  \bl{Lpt2}
\end{align}
In \eqref{smL}-\eqref{Lpt2}, $T_{M^*} > 0$ is the time when all the pullback trajectories starting from $M^*$ permanently enter the absorbing set $M^*$ itself, and $c_1 = c_1 (M^*), c_2 = c_2(M^*)$ are two positive constants. Then there exists a pullback exponential attractor $\mathscr{M} = \{\mathscr{M}(\tau)\}_{\tau \in \mathbb{R}}$ in $X$ for this process. 
\end{proposition}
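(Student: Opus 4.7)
The plan is to adapt the classical Eden--Foias--Nicolaenko--Temam construction of an exponential attractor to the pullback nonautonomous setting: first build a discrete exponential attractor at the sampled times $\tau - k T_{M^*}$ via a Mañ\'e-type covering (squeezing) argument, then fill in the gaps between consecutive samples using the H\"older continuity in time.

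First, set $T = T_{M^*}$ and, for each $\tau \in \mb{R}$, consider the discrete pullback map $S_k(\tau) := S(\tau, \tau - kT)$ acting on $M^*$. Iterating the absorbing property of condition~1) gives $S_k(\tau) M^* \subset M^*$ for all $k \geq 1$, so all pullback orbits emanating from $M^*$ remain confined in a set bounded in $Y$. Combining the smoothing Lipschitz estimate \eqref{smL} with the compact embedding $Y \hookrightarrow X$ then yields the key one-step squeezing: for every $\tau$ and every ball $B_X(g, r) \cap M^*$ with $g \in M^*$, the image $S(\tau, \tau - T)(B_X(g, r) \cap M^*)$ has $Y$-diameter at most $\gk r$, hence is precompact in $X$ and, by compactness of the embedding, can be covered by $N$ balls of $X$-radius $r/4$, where $N$ depends only on $\gk$ and on the embedding, and in particular is independent of $\tau$ and of the center $g$.

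Second, iterate this squeezing. A standard induction, starting from a finite $\delta$-net of $M^*$ in $X$, produces for each $\tau \in \mb{R}$ a nested sequence of finite sets $\ms{E}_k(\tau) \subset M^*$ with $|\ms{E}_k(\tau)| \leq C N^k$ such that $S_k(\tau) M^*$ is contained in the $4^{-k} \delta$-neighborhood of $\ms{E}_k(\tau)$ in $X$. Defining
\[
   \ms{E}^d (\tau) \;=\; \overline{\bigcup_{k \geq 0} S_k(\tau)\, \ms{E}_k (\tau - kT)}
\]
gives a compact set in $X$ that is positively invariant under the discrete pullback process ($S(\tau, \tau - T) \ms{E}^d(\tau - T) \subset \ms{E}^d(\tau)$), has fractal dimension bounded by $(\log N)/(\log 4)$ uniformly in $\tau$, and pullback-attracts $M^*$ at rate $\ggd := (\log 4)/T$ in discrete time. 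The uniformity of $N$ and $\gk$ in $\tau$ furnished by \eqref{smL} is crucial here: it is what makes $\sup_\tau \text{dim}_F \ms{E}^d(\tau)$ finite.

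Third, extend to continuous time by setting
\[
   \ms{M}(\tau) \;=\; \bigcup_{t \in [0, T]} S(\tau, \tau - t)\, \ms{E}^d(\tau - t).
\]
Using the H\"older bounds \eqref{Lpt1}--\eqref{Lpt2}, this union is the H\"older image of a compact set in $[0,T] \times X$, hence compact in $X$, and its fractal dimension is controlled by $\max(1/\ga_1, 1/\ga_2) \cdot \text{dim}_F \ms{E}^d(\tau) + 1$, again uniformly in $\tau$. Positive invariance $S(t, \tau) \ms{M}(\tau) \subset \ms{M}(t)$ follows from the cocycle identity together with the discrete invariance of $\ms{E}^d$. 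For any $t \geq T$, writing $t = kT + s$ with $s \in [0, T)$ and applying the discrete exponential attraction to the orbit $S(\tau - s, \tau - t) B$ delivers the pullback exponential attraction of every bounded $B \subset X$ with rate $\gs < \ggd$, completing the verification of Definition~\ref{PEA}.

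The main obstacle is the squeezing step: ensuring that both the covering number $N$ and the one-step contraction ratio are \emph{uniform} in $\tau \in \mb{R}$. This is precisely why the hypotheses \eqref{smL}--\eqref{Lpt2} are imposed with $\sup_{\tau \in \mb{R}}$; without that uniformity the fractal dimension of $\ms{M}(\tau)$ would degenerate as $|\tau| \to \infty$, violating condition~2) of Definition~\ref{PEA}. Once the uniform squeezing is in hand, the remainder is a fiberwise application of the classical exponential attractor machinery, glued together by the time-H\"older regularity of the process.
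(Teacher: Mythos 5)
This proposition is not proved in the paper at all: it is quoted from Czaja and Efendiev \cite{CE1, CE2}, so there is no in-paper argument to compare against. Your sketch reproduces, correctly in outline, exactly the construction used in those references --- the smoothing estimate \eqref{smL} together with the compact embedding $Y \hookrightarrow X$ gives a $\tau$-uniform iterated-covering (squeezing) scheme for the discrete maps $S(\tau, \tau - kT_{M^*})$, producing discrete pullback exponential attractors of uniformly bounded fractal dimension, which are then thickened to continuous time via the H\"older conditions \eqref{Lpt1}--\eqref{Lpt2}, and the uniformity in $\tau$ is, as you say, the whole point of the $\sup_{\tau}$ in the hypotheses. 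Only cosmetic details would need tightening: the recursion for the nested finite sets should be arranged so that $S(\tau, \tau - T_{M^*})\,\mathscr{E}_k(\tau - T_{M^*}) \subset \mathscr{E}_{k+1}(\tau)$ (which is what secures positive invariance of $\mathscr{E}^d$), and the dimension of the time-thickened fiber is bounded by $\dim_F \mathscr{E}^d(\tau) + 1/\gamma$ rather than by the product you wrote --- neither issue affects the validity of the approach.
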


\emph{Remark} 1. The pullback absorbing set can be a time-parametrized set $M(\tau)$ in $X$ or in $Y$. Here the pullback absorbing sets specified in the above Proposition \eqref{PA} and Proposition \eqref{PeA} are time-invariant, which is what we only need.

\emph{Remark} 2. Another concept to describe the asymptotic global dynamics of a nonautonomous PDE is a skew-product dynamical systems \cite{SY}. It is to embed a nonautonomous semiflow into an augmented autonomous semiflow. The corresponding topic is uniform attractor \cite[Chapter IV]{CV}. 

Although a uniform attractor is not a time-parametrized set, the major drawback is that its fractal dimension and Hausdorff dimension of a uniform attractor are in general infinite. The finite dimensionality reduction is lost. Moreover, it is usually difficult to estimate the oftentimes slow rate of attraction for a uniform attractor in terms of physical parameters in the mathematical model. Therefore, pullback attractor and pullback exponential attractor are favorable pursuit of the asymptotic behavior of nonautonomous dynamical systems generated by PDEs.

\section{\textbf{Pullback Attractor for Nonautonomous Hindmarsh-Rose Process}}

In this section, we shall first prove the global existence in time of the weak solutions to the system \eqref{napb} and then show the pullback absorbing property of the nonautonomous Hindmarsh-Rose process in the space $H$ and also in the space $E$, which leads to the existence of a pullback attractor for this nonautonomous semiflow.

\begin{lemma} \bl{naab}
	The weak solution of the nonautonomous system \eqref{napb} for any initial time $\tau \in \mathbb{R}$ and any initial data $g_\tau \in H$ exists globally for $t \in [\tau, \infty)$ and it generates a continuous evolution process $\{S(t,\tau) \in \mathcal{L}(H) \cap \mathcal{L}(E): t \geq \tau \in \mathbb{R}\}$,
\beq \bl{CP}
    S(t,\tau) g_\tau = g(t, \tau, g_\tau) = \textup{col}\, (u, v, w)(t, \tau, g_\tau)
\eeq
which is called the nonautonomous Hindmaersh-Rose process. Moreover, there exists a time-invariant pullback absorbing set in the space $H$,
\beq \bl{naB}
    M^*_H = \{g \in H: \|g \|^2 \leq K_1\}
\eeq
where $K_1$ is a positive constant independent of $\tau$ and $t$ in the sense that for any given bounded set $B \subset H$,
\beq \bl{pkab}
    S(\tau, \tau - t)B \subset M^*_H, \quad \textup{for}\; \; t \geq T_B,
\eeq
where the constant $T_B > 0$ depend only on $\|B\| =\sup_{g \in B} \|g\|$.
\end{lemma}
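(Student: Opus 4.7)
My plan is to derive an $H$-norm Lyapunov estimate that simultaneously rules out blow-up in Lemma \ref{Lwn} (and so produces the global evolution process $S(t,\tau)$) and yields a time-invariant pullback absorbing ball via a Grönwall inequality tuned to the translation-bounded forcing. The only nontrivial dissipation available to tame the indefinite cubic-in-$u$ coupling coming from $\psi(u)v = \alpha v - \beta u^2 v$ is the quartic damping $-b\int u^4$ generated by $\varphi(u)u$, so the weights in the test function must be balanced around this single fact.

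Concretely, I would test \eqref{uq}, \eqref{vq}, \eqref{wq} against $u$, $\lambda v$, $\mu w$ respectively with positive weights $\lambda, \mu$ to be chosen, sum, and use the Neumann condition to extract the Dirichlet form $d_1\|\nb u\|^2 + \lambda d_2\|\nb v\|^2 + \mu d_3\|\nb w\|^2 \geq 0$. Setting $Y(t) = \|u(t)\|^2 + \lambda\|v(t)\|^2 + \mu\|w(t)\|^2$, the non-dissipative reaction terms are $\int (au^3 - bu^4)$, $-\lambda\beta\int u^2 v$, the constant pieces $J$ and $-\mu c q$, the linear couplings $\int(v-w)u + \mu q \int uw$, and the forcing pairings $\inpt{p_i, \cdot}$. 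Young's inequality gives $au^3 \leq \tfrac{b}{4}u^4 + C$ and $\lambda\beta|u^2 v| \leq \tfrac{b}{4}u^4 + C(\lambda) v^2$, so the quartic damping is fully consumed in exchange for a constant and a $C(\lambda)\|v\|^2$ term. Choosing $\lambda$ small enough that $C(\lambda) < \lambda/2$ (so the extra $v^2$ is absorbed into $-\lambda\|v\|^2$), and $\mu$ small enough that $\mu q uw \leq \tfrac{\mu r}{2}w^2 + C_\mu u^2$ with $C_\mu u^2$ further dominated via the auxiliary bound $b\int u^4 \geq \delta_0\|u\|^2 - C|\gw|$, one arrives at
\beq \bl{plan:diffineq}
	\tfrac{d}{dt} Y(t) + \delta\, Y(t) \,\leq\, C_\ast + C_\ast \bigl( \|p_1(t)\|^2 + \|p_2(t)\|^2 + \|p_3(t)\|^2 \bigr),
\eeq
with positive $\delta, C_\ast$ depending only on the fixed equation parameters.

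Two consequences follow at once. First, $Y(t)$ cannot blow up in finite time, so by Lemma \ref{Lwn} the weak solution extends to $[\tau, \infty)$; the cocycle property is immediate from uniqueness, and the joint continuity of $(t,\tau,g)\mapsto S(t,\tau)g$ follows from the same $L^2$ estimate applied to the difference of two solutions together with the local Lipschitz bound on $f$ from \eqref{opf}. Second, integrating \eqref{plan:diffineq} from $\tau - t$ to $\tau$ yields
\bss
	Y(\tau) \leq e^{-\delta t} Y(\tau - t) + \tfrac{C_\ast}{\delta} + C_\ast \int_{\tau - t}^{\tau} e^{-\delta(\tau - s)} \|p(s)\|^2 \, ds,
\ess
and the translation boundedness hypothesis \eqref{tbd} gives the standard uniform-in-$\tau$ estimate $\int_{-\infty}^{\tau} e^{-\delta(\tau - s)} \|p(s)\|^2 ds \leq (1 - e^{-\delta})^{-1}\sum_{i=1}^{3} \|p_i\|_{L_b^2}^2$ for every $\tau \in \mb R$. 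Declaring $K_1$ to be twice the resulting $\tau$-independent upper bound, one has $Y(\tau) \leq K_1$ as soon as $e^{-\delta t}\|B\|_H^2 \leq K_1/2$, which determines $T_B$ in terms of $\|B\|$ alone; after a rescaling to absorb the weights $\lambda, \mu$, this is precisely \eqref{pkab}.

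The principal delicacy is the indefinite mixed cubic term $\lambda\beta\int u^2 v$: it is neither controlled by the Laplacian nor by the quadratic $-\lambda\|v\|^2$ damping, and survives only because $-b\int u^4$ is strong enough to dominate both $au^3$ and $\lambda\beta u^2 v$ simultaneously after splitting $-b\int u^4$ into two halves. This forces a careful bookkeeping that pins down the admissible range of $\lambda$; once $\lambda$ and $\mu$ are fixed, the remainder of the proof --- global existence, the cocycle relation, joint continuity, and the uniform-in-$\tau$ bound from translation boundedness --- is routine Grönwall and Young.
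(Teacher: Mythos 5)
Your proposal is correct and follows essentially the same route as the paper: a weighted $L^2$ energy functional whose weights are tuned so that the quartic damping $-b\int_\gw u^4$ absorbs both $au^3$ and the indefinite coupling $\beta u^2 v$ via Young's inequality, followed by a Gr\"onwall estimate and the geometric-series bound on $\int e^{-\delta(\tau-s)}\|p(s)\|^2\,ds$ coming from translation boundedness \eqref{tbd}. The only cosmetic difference is that the paper places a large weight $c_1=\frac{1}{b}(\beta^2+3)$ on the $u$-equation while you place small weights $\lambda,\mu$ on the $v$- and $w$-equations, which is equivalent up to rescaling the functional.
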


\begin{proof}
    Take the $H$ inner-product $\langle \eqref{napb}, (c_1 u, v, w) \rangle$ with constant $c_1 > 0$ to obtain
\beq \bl{Sab}
    \begin{split}
    &\frac{1}{2}\frac{d}{dt} \left(c_1 \|u\|^2+ \|v\|^2 + \|w\|^2\right) + \left(c_1 d_1 \|\nb u\|^2 + d_2\|\nb v\|^2 + d_3\|\nb w\|^2\right) \\
    = &\, \int_\gw c_1 (au^3 - bu^4 +uv - uw + Ju + u p_1 (t, x))\,dx\\
    + &\, \int_\gw (\alpha v - \beta u^2 v - v^2 + v p_2(t, x) + q (u - c)w - rw^2 + w p_3 (t, x))\, dx \\
    \leq &\,  \int_\gw c_1 (au^3 - bu^4 +uv - uw + Ju + u p_1 (t, x))\,dx\\
    + &\, \int_\gw \left[\left(2\alpha^2 + \frac{1}{2}\beta^2 u^4 - \frac{3}{8}v^2 \right) + \left(\frac{q^2}{r}(u^2 + c^2) - \frac{1}{2}rw^2\right)+ vp_2 + wp_3\right] dx \\
    \leq &\, \int_\gw c_1 (au^3 - bu^4 +uv - uw + Ju + u p_1 (t, x))\,dx\\
    + &\, \int_\gw \left[\left(2\alpha^2 + \frac{1}{2}\beta^2 u^4 - \frac{3}{8}v^2 \right) + \left(\frac{q^2}{r}(u^2 + c^2) - \frac{1}{2}rw^2\right) \right] dx \\
    +&\,\int_\gw \left[ \frac{1}{8}v^2 + 2|p_2(t,x)|^2 + \frac{1}{8}rw^2 + \frac{2}{r}|p_3(t,x)|^2\right] dx.   
    \end{split}
\eeq
Choose the positive constant in \eqref{Sab} to be $c_1 = \frac{1}{b}(\beta^2 + 3)$ so that 
$$
	- c_1\int_\gw bu^4\, dx + \int_\gw \beta^2 u^4\, dx \leq -3 \int_\gw u^4\, dx.
$$
Note that
$$
	\int_\gw c_1 a u^3\,dx \leq \frac{3}{4}\int_\gw u^4\,dx+\frac{1}{4} (c_1 a)^4 |\gw | \leq \int_\gw u^4 \,dx + (c_1 a)^4 |\gw |,
$$
and
\begin{gather*}
	\int_\gw c_1 (uv - uw + Ju + up_1 (t,x))\,dx \leq \int_\gw \left[2(c_1 u)^2 + \frac{1}{8}v^2 + \frac{(c_1 u)^2}{r}+\frac{1}{4} rw^2 \right. \\
	 \left. +\frac{1}{2}\left((c_1 u)^2 + J^2 +(c_1 u)^2 +|p_1(t,x)|^2\right)\right] dx. 
\end{gather*}
The collection of all integral terms of $u^2$ in the above inequality and in \eqref{Sab} satisfies
$$
	\int_\gw \left(2(c_1u)^2 + \frac{(c_1 u)^2}{r}+ (c_1 u)^2 + \frac{q^2}{r}u^2 \right)dx \leq \int_\gw u^4 \,dx+ \left[c_1^2 \left(3 + \frac{1}{r} \right) + \frac{q^2}{r}\right]^2 |\gw |.
$$
Substitute these inequalities into \eqref{Sab}. Then we get
\beq \bl{SAB}
\begin{split}
	&\frac{1}{2}\frac{d}{dt} \left(c_1 \|u\|^2+ \|v\|^2 + \|w\|^2\right) + \left(c_1 d_1 \|\nb u\|^2 + d_2\|\nb v\|^2 + d_3\|\nb w\|^2\right) \\[3pt]
	\leq &\, \int_\gw c_1 (au^3 - bu^4 +uv - uw + Ju + u p_1 (t, x))\,dx\\
	+ &\, \int_\gw \left[\left(2\alpha^2 + \frac{1}{2}\beta^2 u^4 - \frac{3}{8}v^2 \right) + \left(\frac{q^2}{r}(u^2 + c^2) - \frac{1}{2}rw^2\right) \right] dx \\
	+&\,\int_\gw \left[ \frac{1}{8}v^2 + 2|p_2(t,x)|^2 + \frac{1}{8}rw^2 + \frac{2}{r}|p_3(t,x)|^2\right] dx \\
	\leq &\, \int_\gw (2 - 3)u^4\,dx + \int_\gw\left(\frac{1}{8} - \frac{3}{8} + \frac{1}{8}\right) v^2\,dx + \int_\gw \left( \frac{1}{4} - \frac{1}{2} + \frac{1}{8}\right) rw^2\,dx \\
	+ &\, \int_\gw \left[\frac{1}{2}|p_1 (t,x)|^2+ 2|p_2 (t,x)|^2 + \frac{2}{r}|p_3 (t,x)|^2 \right] dx \\
	+ &\,\left((c_1 a)^4 + J^2 + \left[c_1^2 \left(3 + \frac{1}{r}\right) + \frac{q^2}{r} \right]^2 + 2\alpha^2 + \frac{q^2c^2}{r}\right) |\gw| \\
	\leq &\, - \int_\gw \left(u^4 (t,x) + \frac{1}{8} v^2 (t,x) + \frac{1}{8} rw^2(t,x)\right) dx + \left(2 + \frac{2}{r}\right)\| p(t)\|^2 + c_2 |\gw |,
	\end{split}
\eeq
where
$$
	c_2 = (c_1 a)^4 + J^2 + \left[c_1^2 \left(3 + \frac{1}{r}\right) + \frac{q^2}{r}\right]^2 + 2\alpha^2 + \frac{q^2c^2}{r}.
$$
It follows that
\begin{equation*}
	\begin{split}
	&\frac{d}{dt} (c_1 \|u(t)\|^2 + \|v(t)\|^2 +\|w(t)\|^2) + 2d (c_1 \|\nb u\|^2 + \|\nb v\|^2 + \|\nb w\|^2) \\[5pt]
	+ &\,\int_\gw \left(2u^4 (t, x) + \frac{1}{4}v^2 (t,x) + \frac{1}{4}rw^2(t,x)\right) dx \leq 4 \left(1 + \frac{1}{r}\right)\| p(t)\|^2 + 2c_2 |\gw |,	
	\end{split}
\end{equation*}
where $d = \min \{d_1, d_2, d_3\}$ and we used $$2u^4 \geq \frac{1}{4}\left(c_1 u^2 - \frac{c_1^2}{32}\right).$$
Therefore, 
\begin{equation} \bl{nap}
\begin{split}
	&\frac{d}{dt} (c_1 \|u(t)\|^2 + \|v(t)\|^2 +\|w(t)\|^2) + 2d (c_1 \|\nb u\|^2 + \|\nb v\|^2 + \|\nb w\|^2) \\[8pt]
	+ \frac{1}{4}&\,(c_1 \|u(t)\|^2 + \|v(t)\|^2 + r\|w(t)\|^2)  \leq 4 \left(1 + \frac{1}{r}\right)\| p(t)\|^2 + \left(\frac{c_1^2}{128} + 2c_2\right) |\gw|	
\end{split}
\end{equation}
for $t \in [\tau, T_{max})$, the maximum time interval of existence. Set 
$$
	\delta = \frac{1}{4} \min \{1, r\}.
$$
Then the Gronwall inequality applied to the inequality reduced from \eqref{nap},
\begin{equation*}
    \begin{split}
    &\frac{d}{dt} (c_1 \|u(t)\|^2 + \|v(t)\|^2 +\|w(t)\|^2) + \delta (c_1 \|u(t)\|^2 + \|v(t)\|^2 + \|w(t)\|^2) \\[8pt]
    \leq &\, 4 \left(1 + \frac{1}{r}\right)\| p(t)\|^2 + \left(\frac{c_1^2}{128} + 2c_2\right) |\gw|,	
    \end{split}
\end{equation*}
shows that
\beq \bl{glab}
	\begin{split}
	&c_1 \|u(t)\|^2 + \|v(t)\|^2 +\|w(t)\|^2 \leq e^{- \delta t}(c_1 \|u_\tau \|^2 + \|v_\tau \|^2 +\|w_\tau \|^2) \\[8pt]
	+ 4 &\, \left(1 + \frac{1}{r}\right) \int_\tau^t e^{- \delta (t-s)}\| p(s)\|^2\, ds + \frac{1}{\delta}\left(\frac{c_1^2}{128} + 2c_2\right) |\gw|, \quad t \in [\tau, T_{max}).
	\end{split}
\eeq
By the assumption \eqref{tbd} on the translation boundedness of the external input terms and the upper bound estimate \eqref{glab}, the weak solutions will never blow up at any finite time so that $T_{max} = +\infty$ for all $\tau \in \mathbb{R}$ and any initial data $g_\tau \in H$. Thus the global existence in time of the weak solutions in the space $H$ is proved. Together with the uniqueness and the continuous dependence of $(t, \tau, g_\tau)$ which can be shown, the statement of the continuous evolution process $S(t, \tau)$ in \eqref{CP} is proved. 

In order to prove the claimed existence of a pullback absorbing set, we can exploit the bounded translation property \eqref{tbd} of the time-dependent forcing terms to treat the integral in \eqref{glab} on the time interval $[\tau, t + \tau]$, or equivalently the time interval $[\tau - t, \tau]$, for $t > 0$, as follows:

\beq \bl{pullab}
	\begin{split}
	&c_1\|u(t+\tau)\|^2 + \|v(t+\tau)\|^2 + \|w(t+\tau)\|^2 \\[5pt]
	\leq &\, e^{-\delta t}(c_1\|u(\tau)\|^2 + \|v(\tau)\|^2 + \|w(\tau)\|^2) + \frac{1}{\delta}\left(\frac{c_1^2}{128} + 2c_2\right) |\gw |\\
	+ &\, 4 \left(1 + \frac{1}{r}\right) \int_\tau^{t+\tau} e^{- \delta (t+\tau - s)}\| p(s)\|^2\, ds \\
	\leq &\, e^{-\delta t}(c_1\|u(\tau)\|^2 + \|v(\tau)\|^2 + \|w(\tau)\|^2) + \frac{1}{\delta}\left(\frac{c_1^2}{128} + 2c_2\right) |\gw |\\
	+ &\, 4 \left(1 + \frac{1}{r}\right) \sum_{k = 0}^{\infty} \int_{t+\tau-k-1}^{t+\tau -k} e^{- \delta (t+\tau - s)}\| p(s)\|^2\,ds \\
	\leq &\, e^{-\delta t}(c_1\|u(\tau)\|^2 + \|v(\tau)\|^2 + \|w(\tau)\|^2) + \frac{1}{\delta}\left(\frac{c_1^2}{128} + 2c_2\right) |\gw |\\
	+ &\, 4 \left(1 + \frac{1}{r}\right) \sum_{k = 0}^{\infty} e^{-k \delta}\left(\|p_1 \|^2_{L^2_b} + \|p_2 \|^2_{L^2_b} + \|p_3 \|^2_{L^2_b}\right) \\
	= &\, e^{-\delta t}(c_1\|u_\tau\|^2 + \|v_\tau\|^2 + \|w_\tau\|^2) + \frac{1}{\delta}\left(\frac{c_1^2}{128} + 2c_2\right) |\gw |\\
	+ &\, 4 \left(1 + \frac{1}{r}\right) \frac{1}{1 - e^{-\delta}}\left(\|p_1 \|^2_{L^2_b} + \|p_2 \|^2_{L^2_b} + \|p_3 \|^2_{L^2_b}\right). 
	\end{split}
\eeq
It implies that the global weak solutions of the nonautonomous diffusive Hindmarsh-Rose system \eqref{napb} admit the estimate that, for any $t \geq \tau \in \mathbb{R}$,
\beq \bl{gest}
	\|g(t)\|^2 \leq \frac{\max \{1, c_1\}}{\min \{1,c_1\}} e^{-\delta (t - \tau)}\|g(\tau)\|^2 + \frac{1}{\delta}\left(\frac{c_1^2}{128} + 2c_2\right) |\gw|
	+ 4 \left(1 + \frac{1}{r}\right) \frac{\|p\|^2_{L^2_b}}{1 - e^{-\delta}}. 
\eeq
Hence, for any $\tau - t \leq \tau \in \mathbb{R}$ with $t > 0$, it holds that
\beq \bl{ttau}
	\|g(\tau)\|^2 \leq \frac{\max \{1, c_1\}}{\min \{1,c_1\}} e^{-\delta t}\|g(\tau - t)\|^2 + \frac{1}{\delta}\left(\frac{c_1^2}{128} + 2c_2\right) |\gw|
	+ 4 \left(1 + \frac{1}{r}\right) \frac{\|p\|^2_{L^2_b}}{1 - e^{-\delta}}. 
\eeq
Since 
$$
	\lim_{t \to \infty} e^{-\delta (t - \tau)}\|g(\tau)\|^2 = 0 
$$
uniformly for $g(\tau) = g_\tau$ in any given bounded set $B \subset H$ in regard to \eqref{gest}, and
$$
	\lim_{t \to \infty} e^{-\delta t}\|g(\tau - t)\|^2 = 0
$$
uniformly for $g(\tau - t)$ in any given bounded set $B \subset H$ in regard to \eqref{ttau}, there exists a pullback absorbing set as claimed in \eqref{naB} with the constant
\beq \bl{M1}
	K_1 = 1 +  \frac{1}{\delta}\left(\frac{c_1^2}{128} + 2c_2\right) |\gw|
	+ 4 \left(1 + \frac{1}{r}\right) \frac{\|p\|^2_{L^2_b}}{1 - e^{-\delta}}
\eeq
which is independent of initial time and initial state in $H$. Therefore, the pullback absorbing property \eqref{pkab} for any given bounded set $B \subset H$ is proved: 
$$
	S(\tau, \tau- t)B \subset M^*_H, \quad \textup{for all}\;\; t \geq T_B,
$$
and 
\beq \bl{TB}
	T_B = \frac{1}{\delta}\log^+ \left(\frac{\max \{1, c_1\}}{\min \{1,c_1\}} \|B\|^2 \right) > 0
\eeq	
depends only on $\|B\|$. The proof is completed.
\end{proof}

\begin{lemma} \bl{naac}
	For the nonautonomous diffusive Hindmarsh-Rose system \eqref{napb}, there also exists a time-invariant pullback absorbing set in the space $E$,
\beq \bl{acB}
	M^*_E = \{g \in E: \|g\|^2_E \leq K_2\},
\eeq
where $K_2$ is a positive constant, such that for any given bounded set $B \subset H$,
\beq \bl{naE}
	S(\tau, \tau - t)B \subset M^*_E \quad \textup{for all}\;\;\,  t \geq T_B + 1,
\eeq
for any $\tau \in \mathbb{R}$, where the constant $T_B$ is given in \eqref{TB}.
\end{lemma}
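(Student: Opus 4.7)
The plan is to exploit the pullback $H$-absorbing set $M^*_H$ from Lemma \ref{naab} and obtain an $E$-a priori estimate via a higher-order energy identity combined with the uniform Gronwall lemma over a unit time window.

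By Lemma \ref{naab}, for any bounded $B \subset H$ and $t \geq T_B$, one has $S(\tau, \tau - t)B \subset M^*_H$ for every $\tau \in \mathbb{R}$. It therefore suffices to prove that whenever $g(\tau - 1) \in M^*_H$, the quantity $\|g(\tau)\|_E^2$ is bounded by a constant independent of $\tau$ and of $g(\tau - 1)$. Integrating inequality \eqref{nap} from $\tau - 1$ to $\tau$, using the uniform $H$-bound on $g(\tau - 1)$ together with \eqref{tbd}, yields
$$
\int_{\tau - 1}^{\tau} \bigl(\|\nb u(s)\|^2 + \|\nb v(s)\|^2 + \|\nb w(s)\|^2\bigr)\, ds \leq R_0,
$$
with $R_0$ independent of $\tau$ and of the initial state in $M^*_H$. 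This is the $L^2([\tau - 1, \tau]; E)$ input that feeds the Gronwall step below.

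Next, take the $H$-inner product of \eqref{napb} with $(-c_1\gd u, -\gd v, -\gd w)$, where $c_1 = (\beta^2 + 3)/b$ as in Lemma \ref{naab}, and integrate by parts using the Neumann condition \eqref{nbc}. The diffusion produces the good dissipation $c_1 d_1 \|\gd u\|^2 + d_2 \|\gd v\|^2 + d_3 \|\gd w\|^2$. The cubic reaction gives
$$
-c_1\int_\gw \vp(u)\,\gd u\, dx = c_1\int_\gw (2au - 3bu^2)|\nb u|^2\, dx,
$$
whose $-3bc_1\int u^2|\nb u|^2$ piece is a good negative term that absorbs the quadratic part via $2c_1 a|u||\nb u|^2 \leq c_1 b\, u^2|\nb u|^2 + (c_1 a^2/b)|\nb u|^2$, leaving a residual $-2bc_1 \int u^2|\nb u|^2 + (c_1 a^2/b)\|\nb u\|^2$. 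The mixed term from the $v$-equation is
$$
-\int_\gw \psi(u)\,\gd v\, dx = -2\beta\int_\gw u\,\nb u\cdot \nb v\, dx,
$$
and by Young's inequality with $\varepsilon < 2bc_1$ it is bounded by $\varepsilon\int u^2|\nb u|^2 + C_\varepsilon\|\nb v\|^2$; the first piece is absorbed by the good cubic dissipation just obtained. The remaining linear cross terms $c_1\int \nb u \cdot \nb v$, $-c_1\int \nb u \cdot \nb w$, $q\int \nb u \cdot \nb w$, and the forcing contributions $-c_1\int p_1 \gd u$, $-\int p_i \gd g_i$ are absorbed by Young's inequality against a fraction of the dissipation. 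Summing the three balances yields a differential inequality
$$
\frac{d}{dt}\Phi(t) \leq C_1\,\Phi(t) + C_2 \|p(t)\|^2, \qquad \Phi(t) := c_1\|\nb u(t)\|^2 + \|\nb v(t)\|^2 + \|\nb w(t)\|^2,
$$
where $C_1, C_2 > 0$ depend only on the system coefficients.

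By the previous step and \eqref{tbd}, both $\int_{\tau - 1}^\tau C_2 \|p(s)\|^2\, ds$ and $\int_{\tau - 1}^\tau \Phi(s)\, ds \leq \max\{1, c_1\} R_0$ are bounded uniformly in $\tau$. The uniform Gronwall lemma then gives $\Phi(\tau) \leq K_2'$ with $K_2'$ independent of $\tau$ and of the initial state in $M^*_H$. Setting $K_2 := K_1 + K_2'$ produces the pullback absorbing set $M^*_E$ in \eqref{acB}, and the absorbing property \eqref{naE} follows for $t \geq T_B + 1$. I expect the main technical obstacle to be the mixed term $-\int_\gw \psi(u)\,\gd v\, dx$: one must split it so that the $\int u^2|\nb u|^2$ content is absorbed by the good cubic dissipation $-2bc_1\int u^2|\nb u|^2$ inherited from the $u$-equation, rather than trying to bound it pointwise by powers of $\|g\|_E$, which would spoil the linear Gronwall structure in $\Phi$.
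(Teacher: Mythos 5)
Your argument is correct, and it reaches the conclusion by a genuinely different route at the key technical step. The paper also takes the inner product with $-\Delta g$ and also feeds the resulting differential inequality into the uniform Gronwall lemma over a unit window, using exactly your $L^1$-in-time gradient bound obtained by integrating \eqref{nap} from the $H$-absorbing set. But the paper only integrates by parts the cubic piece $-bu^3$ (keeping $6b\|u\nabla u\|^2$ as a good term) and handles $-au^2\Delta u$ and $\beta u^2\Delta v$ by Young's inequality against $\|\Delta u\|^2$ and $\|\Delta v\|^2$, which produces $\|u\|_{L^4}^4$; after the embedding $H^1\hookrightarrow L^4$ this leaves a term proportional to $\|\nabla u\|^4$, so the paper's differential inequality has the form $\sigma'\leq\xi\sigma+h$ with a solution-dependent coefficient $\xi(t)=\rho\,(8a^2/d_1+4\beta^2/d_2)\|\nabla g(t)\|^2$ controlled only in $L^1_{loc}$, and the final constant carries the factor $\exp\{\rho(8a^2/d_1+4\beta^2/d_2)N_1\}$. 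You instead integrate by parts the entire reaction terms, absorb the quadratic part of $\varphi$ and the mixed term $-2\beta\int_\Omega u\,\nabla u\cdot\nabla v\,dx$ pointwise into the cubic dissipation $\int_\Omega u^2|\nabla u|^2\,dx$, and never invoke the $L^4$ embedding; this yields a linear inequality $\Phi'\leq C_1\Phi+C_2\|p(t)\|^2$ with $C_1$ depending only on the coefficients, hence a cleaner (and in its parameter dependence sharper) constant $K_2$. One small imprecision: the gradient cross terms such as $c_1\int_\Omega\nabla u\cdot\nabla v\,dx$ are not absorbed by the dissipation $\|\Delta u\|^2$; they simply contribute to $C_1\Phi$ — only the forcing terms $-\int_\Omega p_i\,\Delta g_i\,dx$ need the Laplacian dissipation. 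This does not affect the validity of your final inequality or the conclusion.
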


\begin{proof}
	Take the $H$ inner-product $\langle \eqref{napb}, - \Delta g(t) \rangle$ to obtain
\begin{equation*}
	\begin{split}
	&\frac{1}{2}\frac{d}{dt} \left(\|\nb u \|^2 + \|\nb v\|^2 + \|\nb w\|^2\right) + d_1 \|\Delta u\|^2 + d_2 \|\Delta v\|^2 + d_3 \|\Delta w\|^2 \\[3pt]
	=&\,\int_\gw (-au^2 \Delta u - 3bu^2|\nb u\|^2 -v\Delta u + w \Delta u - J\Delta u - p_1(t,x)\Delta u)\, dx \\
	+&\, \int_\gw (-\alpha \Delta v + \beta u^2 \Delta v - |\nb v|^2)\, dx + \int_\gw (qc \Delta w- qu \Delta w - r|\nb w |^2)\, dx \\
	- &\, \int_\gw (p_2(t, x) \gd v + p_3(t, x) \gd w)\,dx.
	\end{split}
\end{equation*}
By using Young's inequality appropriately to treat the integral terms on the right-hand side of the above inequality, we can get 
\beq \bl{acg}
	\begin{split}
	&\frac{d}{dt} \left(\|\nb u \|^2 + \|\nb v\|^2 + \|\nb w\|^2\right) + d_1 \|\Delta u\|^2 + d_2 \|\Delta v\|^2 + d_3 \|\Delta w\|^2 \\[5pt]
	+ &\, 6b \|u \nb u\|^2 + 2\|\nb v \|^2 + 2r \|\nb w \|^2 \\
	\leq &\, \frac{4}{d_1} \|v\|^2 + \frac{4}{d_1}\| w \|^2+ \frac{4a^2}{d_1} \|u\|^4_{L^4} + \frac{8J^2}{d_1} |\gw | + \frac{8}{d_1} \|p_1 (t)\|^2 \\
	+ &\, \frac{2\beta^2}{d_2} \|u \|^4_{L^4} + \frac{4\alpha^2}{d_2} |\gw| + \frac{4}{d_2} \|p_2 (t)\|^2 + \frac{2q^2}{d_3} \|u\|^2 + \frac{4q^2c^2}{d_3} |\gw| +\frac{4}{d_3} \|p_3(t)\|^2 \\
	= &\,  \frac{4}{d_1} \|v\|^2 + \frac{4}{d_1}\| w \|^2  + \frac{2q^2}{d_3} \|u\|^2 +  \left(\frac{4a^2}{d_1} + \frac{2\beta^2}{d_2}\right) \|u\|^4_{L^4} \\
	+&\, \left(\frac{8J^2}{d_1} + \frac{4\alpha^2}{d_2} + \frac{4q^2c^2}{d_3}\right) |\gw| + \frac{8}{d_1} \|p_1 (t)\|^2  + \frac{4}{d_2} \|p_2 (t)\|^2 + \frac{4}{d_3} \|p_3(t)\|^2.
	\end{split}
\eeq
The Sobolev imbedding $H^1(\gw) \hookrightarrow L^4 (\gw)$ tells us that there is a positive constant $\rho > 0$ such that
\beq \bl{L4H1}
	\|u\|^4_{L^4} \leq \rho (\|u\|^2 + \|\nb u\|^2)^2 \leq 2\rho (\|u\|^4 +\|\nb u \|^4).
\eeq
According to Lemma \ref{naab}, for any given bounded set $B \subset H$, we have
\beq \bl{rho}
	\|u(t)\|^2 + \|v(t))\|^2 + \|w(t)\|^2 \leq K_1, \quad \textup{for any}\;\;t \geq T_B, \; g_\tau \in B.
\eeq
Then \eqref{acg} yields the following inequality that for any $t \geq T_B$ and $g_\tau \in B$,
\beq \bl{uniGW}
	\begin{split}
	 &\frac{d}{dt} \left(\|\nb u \|^2 + \|\nb v\|^2 + \|\nb w\|^2\right) + d_1 \|\Delta u\|^2 + d_2 \|\Delta v\|^2 + d_3 \|\Delta w\|^2 \\[8pt]
	+ &\, 6b \|u \nb u\|^2 + 2\|\nb v \|^2 + 2r \|\nb w \|^2 \\[7pt]
	\leq &\, \max \left\{\frac{4}{d_1}, \frac{4q^2 c^2}{d_3}\right\} K_1 +  \left(\frac{8a^2}{d_1} + \frac{4\beta^2}{d_2}\right) \rho K_1^2 + \left(\frac{8a^2}{d_1} + \frac{4\beta^2}{d_2}\right) \rho \|\nb u\|^4 \\
	+&\, \left(\frac{8J^2}{d_1} + \frac{4\alpha^2}{d_2} + \frac{4q^2c^2}{d_3}\right) |\gw| + \frac{8}{d_1} \|p_1 (t)\|^2  + \frac{4}{d_2} \|p_2 (t)\|^2 + \frac{4}{d_3} \|p_3(t)\|^2 .
	\end{split}
\eeq
Hence we can apply the uniform Gronwall inequality \cite[Lemma D.3]{SY} to the following inequality reduced from \eqref{uniGW} on $\nb g(t)= \text{col} \,(\nb u(t), \nb v(t), \nb w(t))$,
\beq \bl{uGW}
	\begin{split}
	&\frac{d}{dt}\|\nb g(t)\|^2 \leq \rho \left(\frac{8a^2}{d_1} + \frac{4\beta^2}{d_2}\right) \|\nb g\|^2 \|\nb g\|^2 \\
	+&\,  \max \left\{\frac{4}{d_1}, \frac{4q^2 c^2}{d_3}\right\} K_1 +  \left(\frac{8a^2}{d_1} + \frac{4\beta^2}{d_2}\right) \rho K_1^2 \\
	+&\,\left(\frac{8J^2}{d_1} + \frac{4\alpha^2}{d_2} + \frac{4q^2c^2}{d_3}\right) |\gw| + \frac{8}{d_1} \|p_1 (t)\|^2  + \frac{4}{d_2} \|p_2 (t)\|^2 + \frac{4}{d_3} \|p_3(t)\|^2
	\end{split}
\eeq
which is written in the form 
\beq \bl{rs}
	\frac{d\gs}{dt} \leq \xi \,\gs + h, \quad \text{for}\;\;  t \geq T_B, \; g_\tau \in B,
\eeq
where 
\begin{gather*}
	\gs (t) = \|\nb g(t)\|^2,   \quad  \xi (t) = \rho \left(\frac{8a^2}{d_1} + \frac{4\beta^2}{d_2}\right) \|\nb g\|^2,  \quad \text{and}  \\
	h(t) = \max \left\{\frac{4}{d_1}, \frac{4q^2 c^2}{d_3}\right\} K_1 +  \left(\frac{8a^2}{d_1} + \frac{4\beta^2}{d_2}\right) \rho K_1^2 \\
	+ \left(\frac{8J^2}{d_1} + \frac{4\alpha^2}{d_2} + \frac{4q^2c^2}{d_3}\right) |\gw| + \frac{8}{d_1} \|p_1 (t)\|^2  + \frac{4}{d_2} \|p_2 (t)\|^2 + \frac{4}{d_3} \|p_3(t)\|^2.
\end{gather*}
For $t \geq T_B$, by integration of the inequality \eqref{nap} we can deduce that
\begin{gather*}
	\int_t^{t+1} 2d(c_1 \|\nb u(s)\|^2 +\|\nb v(s)\|^2+ \|\nb w(s)\|^2)\, ds\\
	\leq c_1 \|u(t)\|^2 + \|v(t)\|^2 + \|w(t)\|^2 + 4\left(1 + \frac{1}{r}\right) \int_t^{t+1} \|p(s)\|^2 ds + \left(\frac{c_1^2}{128} + 2c_2\right)|\gw | \\
	\leq \max \{1, c_1\} K_1+ 4\left(1 + \frac{1}{r}\right) \| p \|^2_{L^2_b} + \left(\frac{c_1^2}{128} + 2c_2\right)|\gw |,
\end{gather*}
where $ \| p \|^2_{L^2_b} = \sum_{i=1}^3  \| p_i \|^2_{L^2_b}$. Denote by
$$
	N_1 =  \frac{1}{2d \min \{1, c_1\}} \left[\max \{1, c_1\} K_1+ 4\left(1 + \frac{1}{r}\right) \| p \|^2_{L^2_b} + \left(\frac{c_1^2}{128} + 2c_2\right)|\gw |\right]
$$
and
\begin{align*}
	N_2 = \max \left\{\frac{4}{d_1}, \frac{4q^2 c^2}{d_3}\right\} K_1 +  \left(\frac{8a^2}{d_1} + \frac{4\beta^2}{d_2}\right) \rho K_1^2 + \left(\frac{8J^2}{d_1} + \frac{4\alpha^2}{d_2} + \frac{4q^2c^2}{d_3}\right) |\gw|.
\end{align*}
Then we have
\beq \bl{gsphih}
	\begin{split}
	\int_t^{t+1} \gs (s)\,ds &\leq N_1, \\
	\int_t^{t+1} \xi (s)\, ds &\leq \rho \left(\frac{8a^2}{d_1} + \frac{4\beta^2}{d_2}\right)N_1, \\
	\int_t^{t+1} h(s)\,ds & \leq N_2 + \max \left\{\frac{8}{d_1}, \, \frac{4}{d_2}, \,\frac{4}{d_3}\right\}\|p\|^2_{L^2_b}.
	\end{split}
\eeq
Thus the uniform Gronwall inequality applied to \eqref{rs} shows that
\beq \bl{BE}
	\|\nb g(t)\|^2 \leq \left(N_1 + N_2 + \max \left\{\frac{8}{d_1}, \, \frac{4}{d_2}, \,\frac{4}{d_3}\right\}\|p\|^2_{L^2_b} \right) \exp \left\{ \rho \left(\frac{8a^2}{d_1} + \frac{4\beta^2}{d_2}\right)N_1\right\}, 
\eeq
for all $t \geq T_B + 1$ and all $g_\tau \in B$. Therefore, the claim \eqref{acB} of a pullback absorbing ball $M^*_E$ in the space $E$ is proved and the constant $K_2$ is given by
$$
	K_2 = K_1 + \left(N_1 + N_2 + \max \left\{\frac{8}{d_1}, \, \frac{4}{d_2}, \,\frac{4}{d_3}\right\}\|p\|^2_{L^2_b} \right) \exp \left\{ \rho \left(\frac{8a^2}{d_1} + \frac{4\beta^2}{d_2}\right)N_1\right\}.
$$
Indeed, for any given bounded set $B \subset H$, we have
$$
	S(\tau, \tau - t)B \subset M^*_E \quad \text{for all}\;\; t \geq T_B + 1.
$$
The proof is completed.
\end{proof}
Now we prove the first main result of this paper. 
\begin{theorem} \bl{PAC}
	Under the assumption \eqref{tbd}, for any positive parameters and $c \in \mathbb{R}$ in the Hindmarsh-Rose equations \eqref{uq}-\eqref{wq}, there exists a pullback attractor $\mathcal{A} = \{\mathcal{A} (\tau)\}_{\tau \in \mathbb{R}}$ in $H$ for the nonautonomous Hindmarsh-Rose process $\{S(t, \tau)\}_{t \geq \tau \in \mathbb{R}}$.
\end{theorem}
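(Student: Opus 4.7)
The plan is to verify the two sufficient conditions in Proposition \ref{PA} and then invoke that result. Condition (i), the existence of a time-invariant pullback absorbing set in $H$, has already been established in Lemma \ref{naab}, where the set $M^*_H = \{g \in H : \|g\|^2 \leq K_1\}$ absorbs every bounded subset of $H$ after a finite time $T_B$ depending only on $\|B\|$. Thus only the pullback asymptotic compactness (condition (ii)) remains to be verified.

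For condition (ii), I would leverage Lemma \ref{naac}, which furnishes a bounded pullback absorbing set $M^*_E$ in the stronger space $E = [H^1(\gw)]^3$. The key observation is the compact Sobolev imbedding $H^1(\gw) \hookrightarrow L^2(\gw)$ (Rellich--Kondrachov, valid since $\gw$ is a bounded domain with Lipschitz boundary), which yields the compact imbedding $E \hookrightarrow\hookrightarrow H$. Given any sequence $t_k \to \infty$ and $\{x_k\} \subset B$ with $B \subset H$ bounded, Lemma \ref{naac} guarantees that $S(\tau, \tau - t_k) x_k \in M^*_E$ for all sufficiently large $k$ (namely $t_k \geq T_B + 1$). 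Since $M^*_E$ is a bounded set in $E$, and $E$ is compactly embedded in $H$, the sequence $\{S(\tau, \tau - t_k) x_k\}$ is precompact in $H$ and therefore admits a convergent subsequence in $H$. This is exactly the pullback asymptotic compactness required by Proposition \ref{PA}.

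With both conditions verified, Proposition \ref{PA} yields the existence (and uniqueness) of a pullback attractor $\mathcal{A} = \{\mathcal{A}(\tau)\}_{\tau \in \mathbb{R}}$ in $H$ for the nonautonomous Hindmarsh-Rose process $\{S(t, \tau)\}_{t \geq \tau \in \mathbb{R}}$, represented by the formula
\[
	\mathcal{A}(\tau) = \bigcap_{s \geq 0}\, \overline{\bigcup_{t \geq s} S(\tau, \tau - t)\, M^*_H}.
\]

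I do not expect any substantial obstacle: the heavy lifting has already been carried out in Lemmas \ref{naab} and \ref{naac}, which give the $H$-dissipativity and the stronger $E$-dissipativity respectively. The proof of the theorem is essentially an invocation of the abstract Proposition \ref{PA} combined with the compact Sobolev embedding, so it should occupy at most a short paragraph. The only minor point to be careful about is to note that the $E$-absorbing property holds for bounded sets in $H$ (not just in $E$), which is exactly what Lemma \ref{naac} asserts, so asymptotic compactness in $H$ follows for any $H$-bounded initial family.
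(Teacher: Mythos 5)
Your proposal is correct and follows essentially the same route as the paper: condition (i) of Proposition \ref{PA} from Lemma \ref{naab}, condition (ii) from Lemma \ref{naac} together with the compact imbedding $E \hookrightarrow H$, and then the abstract result gives the attractor with the same formula. The only difference is that you spell out the precompactness argument slightly more explicitly, which the paper leaves implicit.
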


\begin{proof}
	By Lemma \ref{naab}, there exists a pullback absorbing set $M^*_H$ in $H$ for the solution process $\{S(t, \tau): t \geq \tau \in \mathbb{R}\}$ of the nonautononous Hindmarsh-Rose system \eqref{napb} so that the first condition in Proposition \ref{PA} is satisfied. 
	
	By Lemma \ref{naac} and the compact embedding $E \hookrightarrow H$, the existence of a pullback absorbing set $M^*_E$ in $E$ for this nonautonomous process shows that any sequence $\{S(\tau, \tau - t_k)g_k\}_{k=1}^\infty$, where $t_k \to \infty$ and $\{g_k\}$ in any given bounded set of $H$ has a convergent subsequence. 
	Thus the second condition of the pullback asymptotic compactness in Proposition \ref{PA} is also satisfied.
	
	Then by Proposition \ref{PA}, there exists a pullback attractor $\mathcal{A} = \{\mathcal{A}(\tau)\}_{\tau \in \mathbb{R}}$, 
$$
	\mathcal{A}(\tau) = \bigcap_{s \geq 0} \, \overline{\bigcup_{t \geq s} S (\tau, \tau - t)M^*_H} \, ,
$$
for this nonautonomous Hindmarsh-Rose process.  
\end{proof}

\section{\textbf{The Existence of Pullback Exponential Attractor}}

In this section, we shall prove the existence of a pullback exponential attractor for the nonautonomous Hindmarsh-Rose process based on Proposition \ref{PeA}. The key leverage is to prove the smoothing Lipschitz continuity of this nonautonomous process with respect to the initial data. 

\begin{theorem}[Smoothing Lipschitz Continuity] \bl{LpHE}
	For the nonautonomous Hindmarsh-Rose process $\{S(t, \tau)\}_{t \geq \tau \in \mathbb{R}}$ in \eqref{CP} generated by the weak solutions of the nonautonomous Hinsmarsh-Rose system \eqref{napb}, there exists a constant $\kappa > 0$ such that 
\beq \bl{lpH}
	\sup_{\tau \in \mathbb{R}} \|S(\tau, \tau - T_{M^*_E})g_\tau - S(\tau,\tau- T_{M^*_E})\tg_\tau \|_E \leq \kappa \|g_\tau - \tg_\tau\|,  \quad \text{for}\;\, g_\tau, \tg_\tau \in M^*_E ,
\eeq
where $T_{M^*_E} > 0$ is the time when all the pullback solution trajectories of \eqref{napb} starting from the set $M^*_E$ in \eqref{acB} permanently enter the set $M^*_E$ itself shown in Lemma \ref{naac}.
\end{theorem}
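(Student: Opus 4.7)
My plan is to reduce the claim to two successive energy estimates on the difference $G = g - \tg = \textup{col}(U,V,W)$, where $g(t) = S(t,\tau - T_{M^*_E})g_\tau$ and $\tg(t) = S(t,\tau - T_{M^*_E})\tg_\tau$ on $t \in [\tau - T_{M^*_E}, \tau]$. The key simplification is that the external forcing $p(t,x)$ cancels in the difference equation, so $G$ satisfies the essentially autonomous system
\begin{align*}
U_t &= d_1 \gd U + \vp(u)-\vp(\tu) + V - W, \\
V_t &= d_2 \gd V + \psi(u)-\psi(\tu) - V, \\
W_t &= d_3 \gd W + qU - rW,
\end{align*}
with homogeneous Neumann data. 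By Lemma~\ref{naac}, both $g(t)$ and $\tg(t)$ lie in the $E$-ball $\{\|\cdot\|_E^2 \leq K_2\}$ throughout this interval, uniformly in $\tau \in \mathbb{R}$, supplying the a priori control needed to handle the nonlinearities.

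First I would establish an $H$-Lipschitz bound by taking the weighted $H$ inner product $\langle\,\cdot\,,(c_1 U, V, W)\rangle$ of the difference system, with $c_1$ chosen as in Lemma~\ref{naab}. The dangerous cubic contribution from $\vp(u) - \vp(\tu) = [a(u+\tu) - b(u^2+u\tu+\tu^2)]U$ is controlled via the elementary pointwise inequality
$$
    a(u+\tu) - b(u^2+u\tu+\tu^2) \leq \frac{a^2}{3b},
$$
obtained by maximizing the (concave) quadratic form in $(u,\tu) \in \mathbb{R}^2$. The $\psi$-difference $-\gb(u+\tu)U$ and the linear coupling terms are then handled by Hölder, the Sobolev embedding $H^1 \hookrightarrow L^6$ (valid since $n \leq 3$), Young's inequality, and the $E$-bound $\|u\|_E + \|\tu\|_E \leq 2\sqrt{K_2}$. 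This produces
$$
    \frac{d}{dt}\|G\|^2 + \mu\,\|\nb G\|^2 \leq C_1 \|G\|^2,
$$
with $\mu, C_1 > 0$ uniform in $\tau$. Gronwall yields $\|G(t)\|^2 \leq e^{C_1 T_{M^*_E}}\|G(\tau-T_{M^*_E})\|^2$, and integration in time produces the crucial time-averaged bound $\int_{\tau - T_{M^*_E}}^{\tau}\|\nb G(s)\|^2\,ds \leq C_2 \|G(\tau - T_{M^*_E})\|^2$.

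Next I would pair the difference system with $-\gd G$ in $H$ and integrate the nonlinearities by parts. The leading cubic term becomes
$$
    \int_\gw \vp'(u)|\nb U|^2\,dx + \int_\gw (\vp'(u)-\vp'(\tu))\,\nb\tu \cdot \nb U\,dx,
$$
in which $\vp'(u) = 2au - 3bu^2 \leq a^2/(3b)$ pointwise, while the $\psi$ and coupling contributions admit an analogous decomposition. After Hölder, Gagliardo--Nirenberg, and absorption of a small fraction of the dissipation on the left, one arrives at a differential inequality of the form
$$
    \frac{d}{dt}\|\nb G\|^2 \leq \xi(t)\,\|\nb G\|^2 + h(t),
$$
whose ingredients $\xi(t), h(t)$ have unit-interval $L^1$-norms controlled by $K_2$ and $\|p\|^2_{L^2_b}$, and where $h$ carries an additional factor $\|G(t)\|^2 \leq e^{C_1 T_{M^*_E}}\|G(\tau-T_{M^*_E})\|^2$ from the first step. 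Using the integrated bound on $\|\nb G\|^2$ as the time-averaged hypothesis, the uniform Gronwall lemma \cite[Lemma~D.3]{SY} applied on the interval of length $T_{M^*_E}$ delivers $\|\nb G(\tau)\|^2 \leq \kappa_0^2 \|G(\tau - T_{M^*_E})\|^2$ with $\kappa_0$ independent of $\tau$; combining with the $H$-bound from the first step yields \eqref{lpH}.

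The main obstacle will be the cross term $\int(\vp'(u)-\vp'(\tu))\nb\tu\cdot\nb U\,dx$ in the second step: with $n = 3$ the $E$-bounds on $u,\tu$ do not supply an $L^\infty$ control, so one must redistribute regularity through $\|U\|_{L^6} \lesssim \|U\|_E$, the Gagliardo--Nirenberg interpolation $\|\nb U\|_{L^3}^2 \lesssim \|\nb U\|\,\|\gd U\| + \|\nb U\|^2$, and Young's inequality, then absorb a small fraction of $\|\gd U\|^2$ into the $d_1\|\gd U\|^2$ produced naturally by the $-\gd G$ pairing. Once this interpolation step is carried out carefully, the remaining algebra is routine Gronwall bookkeeping, with uniformity in $\tau$ ensured by the translation boundedness \eqref{tbd} of the forcing and the time-invariance of $M^*_E$.
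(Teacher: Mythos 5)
Your proposal is correct in outline but reaches the $E$-norm estimate by a genuinely different mechanism than the paper. Your first step essentially coincides with the paper's Step 1: the paper also tests the difference equation against $\Pi = g - \tg$, controls the cubic term by the same concavity argument (in the sharper form $a(u+\tu)-b(u^2+u\tu+\tu^2) \leq -\tfrac{b}{4}(u^2+\tu^2)+\tfrac{2a^2}{b}$, whose retained negative part is used to cancel the $\psi$-contribution in the $V$-equation), and arrives at $\tfrac{d}{dt}\|\Pi\|^2 \leq C_*\|\Pi\|^2$. The divergence is in the smoothing step: rather than testing against $-\gd\Pi$, the paper writes $\Pi$ as a mild solution, $\Pi(t+\tau)=e^{At}\Pi(\tau)+\int_\tau^{t+\tau}e^{A(t+\tau-s)}(f(g(s))-f(\tg(s)))\,ds$, and invokes the analytic-semigroup estimate $\|e^{At}\|_{\mathcal{L}(H,E)}\leq C_1 t^{-1/2}$, so the only nonlinear input needed is a bound on $\|f(g)-f(\tg)\|$ in terms of $\|g-\tg\|$ and pointwise-in-time $H^1$-bounds on $u,\tu$ (its Step 3, a Gagliardo--Nirenberg refinement of \eqref{acg}). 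Your route --- a second energy estimate plus the uniform Gronwall lemma with $\int\|\nb G\|^2\,ds \lesssim \|G(\tau-T_{M^*_E})\|^2$ as the averaged hypothesis --- reuses exactly the machinery of Lemma \ref{naac} and avoids semigroup theory, but it forces you to confront the cross term $\int_\gw(\vp'(u)-\vp'(\tu))\,\nb\tu\cdot\nb U\,dx$, which you rightly single out and which must be closed by interpolation and absorption into $d_1\|\gd U\|^2$; note also that the coefficient $\xi(t)$ there will involve $\|\tu\|_{H^2}$, which is only controlled in a time-averaged sense, so the uniform Gronwall hypothesis $\int_t^{t+1}\xi\,ds \leq a_1$ is genuinely needed rather than a pointwise bound. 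The paper's route sidesteps that term entirely, but at the cost of measuring products such as $(u+\tu)(u-\tu)$ in $L^2(\gw)$, which properly requires an $L^6$--$L^3$ H\"{o}lder splitting rather than the naive factorization the paper writes down --- so neither route is free of delicate bookkeeping. Both produce a $\kappa$ depending on $T_{M^*_E}$, $K_1$, $K_2$ and $\|p\|^2_{L^2_b}$ and degenerating as $T_{M^*_E}\to 0$ ($T_{M^*_E}^{-1/2}$ in the paper, $T_{M^*_E}^{-1}$ through the uniform Gronwall in yours), which is harmless since $T_{M^*_E}$ is a fixed positive constant.
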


\begin{proof}
	It is equivalent to prove that 
\beq \bl{lpHe}
	\sup_{\tau \in \mathbb{R}} \|S(\tau + T_{M^*_E}, \tau)g_\tau - S(\tau + T_{M^*_E}, \tau)\tg_\tau \|_E \leq \kappa \|g_\tau - \tg_\tau\|, \quad g_\tau, \tg_\tau \in M^*_E.
\eeq
Denote two solutions with any given initial data $g_\tau$ and $\tg_\tau$ by
$g(t) = (u(t), v(t), w(t))$ and $\tg (t) = (\tu (t), \tv (t), \tw (t))$, respectively. Denote the difference by $\Pi (t) = g(t) - \tg (t) = (U(t), V(t), W(t))$. Then $\Pi (t)$ is the solution of the following intial value problem
\begin{equation} \bl{vpeq}
	\begin{split}
	\frac{d\Pi}{dt} = &\, A\Pi + f(g) -f(\tg), \quad t \geq \tau \in \mathbb{R}, \\[3pt]
	&\Pi (\tau) = g_\tau - \tg_\tau.	
	\end{split}
\end{equation}
 
Step 1.  Take the inner-product $\langle \eqref{vpeq}, \Pi (t) \rangle$ through three component equations \eqref{uq}-\eqref{wq}. For the first component equation of $\Pi (t) = g(t) - \tg (t)$, we get
\beq \label{Uq}
	\begin{split}
	&\frac{1}{2} \frac{d}{dt} \|U(t)\|^2 + d_1\|\nb U(t)\|^2 =  \langle f_1 (g) - f_1 (\tg), u - \tu \rangle \\[2pt]
	= &\,  \int_\gw \left(a(u -  \tu)^2 (u + \tu) - b(u - \tu)^2 (u^2 \tu + u \tu + \tu^2)\right) dx \\
	&\, + \int_\gw ((v  - \tv)(u - \tu) - (w - \tw)(u - \tu)) \, dx \\
	\leq &\, \int_\gw (u -  \tu)^2 \left[ a(u  + \tu) - b (u^2 + u \tu + \tu^2)\right] dx \\[5pt]
	&\, + \| u - \tu \| (\|v - \tv \| + \|w - \tw \|) \\[5pt]
	\leq &\, \int_\gw (u -  \tu)^2 \left[ a(u  + \tu) - b (u^2 + u \tu + \tu^2)\right] dx + 2 \|g - \tg \|^2
	\end{split}
\eeq
and by Young's inequality we have 
\begin{gather*}
	a (u + \tu) -  - b (u^2 + u \tu + \tu^2) = [a (u + \tu) - b u\tu] - b(u^2 + \tu^2) \\[5pt]
	\leq  \left(\frac{b}{4} u^2 + \frac{a^2}{b}\right) + \left(\frac{b}{4} \tu^2 + \frac{a^2}{b}\right) + \frac{b}{2} (u^2 + \tu^2) - b(u^2 + \tu^2) \leq - \frac{b}{4} (u^2 + \tu^2) + \frac{2a^2}{b}.
\end{gather*}
It follows that
\beq \label{Ue}
	\begin{split}
	& \frac{d}{dt} \|U(t)\|^2 \leq \frac{d}{dt} \|U(t)\|^2 + 2d_1\|\nb U(t)\|^2 \\[5pt]
	\leq &\, 2\int_\gw (u - \tu)^2 \left( - \frac{b}{4} (u^2 + \tu^2) + \frac{2a^2}{b}\right) dx + 4 \|g - \tg\|^2 \\
	\leq &\, \int_\gw (u - \tu)^2 \left( - \frac{b}{2} (u^2 + \tu^2)\right) dx + \frac{4a^2}{b} \|u - \tu \|^2 + 4 \|g - \tg\|^2 \\
	\leq &\, - \frac{b}{2} \int_\gw (u - \tu)^2 (u^2 + \tu^2)\, dx + 4\left( 1+ \frac{a^2}{b}\right) \|\Pi (t) \|^2. 	
	\end{split}
\eeq
Similarly, for the second and third components of $\Pi(t) = g(t) - \tg (t))$, we get
\beq \label{Ve}
	\begin{split}
	\frac{d}{dt} &\, \|V(t)\|^2 \leq \frac{d}{dt} \|V(t)\|^2 + 2d_2\|\nb V(t)\|^2 \leq 2\langle \psi (u) - \psi (\tu) - (v - \tv), v - \tv \rangle \\[3pt]
	= &\, 2\int_\gw \left( - \beta (u^2- \tu^2) - (v  - \tv) \right) (v - \tv)\, dx \\
	\leq &\, 2\int_\gw \left(- \beta (u - \tu) u (v - \tv) - \beta (u - \tu)\tu (v - \tv)\right) dx \\
	\leq &\, \int_\gw \left(\frac{bu^2}{2} (u - \tu)^2 + \frac{b\tu^2}{2} (u - \tu)^2\right) dx + \frac{4\beta}{b} \|v - \tv \|^2  \\
	\leq &\; \frac{b}{2} \int_\gw \, (u^2 + \tu^2) (u - \tu)^2 \, dx + \frac{4\beta}{b}\, \|\Pi (t) \|^2
	\end{split}
\eeq
and
\beq \label{We}
	\begin{split}
	\frac{d}{dt} &\, \|W(t)\|^2 \leq \frac{d}{dt} \|W(t)\|^2 + 2d_3\|\nb W(t)\|^2 \leq 2\langle q(u - \tu) - r(w - \tw), w - \tw \rangle \\[3pt]
	= &\, 2\int_\gw \left(q (u- \tu) - r(w  - \tw) \right) (w - \tw)\, dx \\[3pt]
	\leq &\, q \|u - \tu\|^2 + (q + 2r) \|w - \tw \|^2 \leq 2 (q + r)\, \|\Pi(t) \|^2.
	\end{split}
\eeq
Sum up the inequalities \eqref{Ue}, \eqref{Ve} and \eqref{We} with a cancellation of the first terms on the rightmost side of \eqref{Ue} and \eqref{Ve}. Then we obtain
\beq \bl{vpp}
	\begin{split}
	&\frac{d}{dt} \|\Pi \|^2 + 2(d_1 \|\nb U \|^2 + d_2 \|\nb V \|^2 + d_3\|\nb W \|^2) = 2\langle f(g) - f(\tg), \Pi \rangle \\[3pt]
	\leq &\, \left(4\left[1 + \frac{1}{b} (a^2 + \beta)\right] + 2(q + r) \right)\|\Pi \|^2.
	\end{split}
\eeq
It follows that, for any $g_\tau, \tg_\tau \in M^*_E$ and indeed for any $g_\tau, \tg_\tau \in H$,
\beq \bl{piq}
	\frac{d}{dt} \, \|\Pi\|^2 \leq C_*\|\Pi \|^2
\eeq
where the constant $C_* = 4\left(1 + \frac{1}{b} (a^2 + \beta)\right) +2(q + r)$. Consequently,
\beq \bl{PiH}
	\begin{split}
	&\|S(t + \tau, \tau) g_\tau - S(t + \tau,\tau)\tg_\tau \|^2 = \|\Pi (t + \tau)\|^2 \\[6pt]
	\leq &\, e^{C_*t}\|\Pi (\tau)\|^2 = e^{C_*t}\|g_\tau-\tg_\tau\|^2, \quad t \geq 0, \;\, \tau \in \mathbb{R}.
	\end{split}
\eeq

Step 2. In oder to prove \eqref{lpHe}, we express the weak solution of \eqref{vpeq} by using the mild solution formula,
\beq \bl{milds}
	\Pi (t + \tau) = e^{At} \, \Pi (\tau) + \int_\tau^{t+\tau} e^{A(t +\tau - s)} (f(g(s)) - f(\tg (s))\,ds, \quad t \geq 0,
\eeq
where the $C_0$-semigroup $\{e^{At}\}_{t \geq 0}$ is generated by the operator $A$ defined in \eqref{opA}. By the regularity property of the analytic $C_0$-semigroup $\{e^{At}\}_{t \geq0}$ \cite{Rb,SY}, it holds that $e^{At}: H \to E$ for $t > 0$ and there is a constant $C_1 > 0$ such that
\beq \bl{eAt}
	\|e^{At}\|_{\mathcal{L}(H, \,E)} \leq C_1 \,  t^{-1/2}, \quad t > 0.
\eeq
Thus we have
\beq \bl{mdineq}
	\begin{split}
	\|\Pi (t + \tau)\|_E \leq &\, \|e^{At}\|_{\mathcal{L}(H, \,E)} \|\Pi (\tau)\| +  \int_\tau^{t+\tau} \|e^{A(t +\tau - s)}\|_{\mathcal{L}(H, E)}\| (f(g(s)) - f(\tg (s))\|\,ds \\[4pt]
	\leq &\, \frac{C_1}{\sqrt{t}} \,\|g_\tau - \tg_\tau\| + \int_{\tau}^{t + \tau} \frac{C_1}{\sqrt{t + \tau - s}} \, \| (f(g(s)) - f(\tg (s))\|\,ds, \quad t > 0.
	\end{split}
\eeq
Here we estimate the norm of the difference in the last integral of \eqref{mdineq},
\begin{align*}
	&\|f(g) - f(\tg)\|^2 = \|\vp (u) - \vp(\tu) + (v - \tv) -(w-\tw)\|^2 \\[3pt]
	+ &\, \|\psi (u) - \psi (\tu)- (v - \tv)\|^2 + \|q(u - \tu) - r(w-\tw)\|^2 \\[3pt]
	\leq &\, 3\|\vp (u) - \vp (\tu)\|^2 + 2\|\psi (u) - \psi (\tu)\|^2 + 2q \|u -\tu\|^2 + 5 \|v -\tv \|^2 + (3+2r) \|w-\tw\|^2  \\[3pt]
	= &\, (3a^2 + 2\beta^2)\|u^2 - \tu^2\|^2 + 3b^2 \|u^3 - \tu^3 \|^2 \\[3pt]
	+ &\, 2q \|u -\tu\|^2 + 5 \|v -\tv \|^2 + (3+2r) \|w-\tw\|^2 \\[3pt]
	\leq &\, (6a^2 + 4\beta^2)(\|u\|^2 +\|\tu\|^2)\|u - \tu\|^2 + 3b^2 \|u^2 + u\tu + \tu^2\|^2 \|u - \tu\|^2 \\[3pt]
	+ &\, 2q \|u -\tu\|^2 + 5 \|v -\tv \|^2 + (3+2r) \|w-\tw\|^2 \\[3pt]
	\leq &\, (6a^2 + 4\beta^2)(\|u\|^2 +\|\tu\|^2)\|u - \tu\|^2 + 3b^2 \|u^2 + u\tu + \tu^2\|^2 \|u - \tu\|^2 \\[3pt]
	+ &\, 2q \|u -\tu\|^2 + 5 \|v -\tv \|^2 + (3+2r) \|w-\tw\|^2,
\end{align*}
where in the term $3b^2 \|u^2 + u\tu + \tu^2\|^2 \|u - \tu\|^2$, we deduce that
\begin{align*}
	&\|u^2 + u\tu + \tu^2\|^2 = \int_\gw (u^2 + u\tu + u^2)^2 \, dx \\
	=&\, \int_\gw (u^4 + 3u^2\tu^2 + \tu^4 + 2u\tu (u^2 + \tu^2))\, dx \\
	\leq &\, \left(u^4 + \tu^4 + \frac{3}{2} (u^4 + \tu^4) + u^2 \tu^2 + (u^2 + \tu^2)^2 \right) dx \\
	\leq &\, 5\int_\gw (u^4 + \tu^4)\, dx = 5 \left(\|u\|^4_{L^4} + \|\tu\|^4_{L^4}\right). 
\end{align*}
Substitute the above inequalities into the integral term in \eqref{mdineq} and use the embedding inequality \eqref{L4H1} to obtain
\beq \bl{mdest}
\begin{split}
	&\|\Pi (t + \tau)\|_E \leq \frac{C_1}{\sqrt{t}} \, \|g_\tau - \tg_\tau\| + \int_{\tau}^{t + \tau} \frac{C_1}{\sqrt{t + \tau - s}}\, \| (f(g(s)) - f(\tg (s))\|\,ds \\
	\leq &\, \frac{C_1}{\sqrt{t}} \,\|g_\tau - \tg_\tau\| + \int_{\tau}^{t + \tau} \frac{C_1}{\sqrt{t + \tau - s}} \,(6a^2 + 4\beta^2)(\|u\|^2 +\|\tu\|^2)\|u - \tu\|^2\, ds \\
	+ &\,\int_\tau^{t+\tau}  \frac{C_1}{\sqrt{t + \tau - s}}\, 30b^2 \rho \left(\|u\|^4_{H^1} + \|\tu\|^4_{H^1}\right)\|u - \tu\|^2\,ds \\
	+ &\, \int_{\tau}^{t + \tau} \frac{C_1}{\sqrt{t + \tau - s}} \,( 2q \|u -\tu\|^2 + 5 \|v -\tv \|^2 + (3+2r) \|w-\tw\|^2 )\,ds, \: t \geq 0,
\end{split}
\eeq
for any $\tau \in \mathbb{R}$. 

Note that from \eqref{gest} and \eqref{acB}, since both $g_\tau$ and $\tg_\tau$ are in $M^*_E$, we have
\beq \bl{gbd}
	\|u(t+\tau)\|^2 \leq \|g(t+\tau)\|^2 \leq G_1 = \frac{\max \{1,c_1\}}{\min \{1, c_1\}} K_2 + K_1, \quad \text{for} \;\, t \geq 0,\tau \in \mathbb{R},
\eeq
where the positive constants $K_1$ and $K_2$ are given in \eqref{M1} and \eqref{acB} respectively, and independent of $t$ and $\tau$.

Step 3. We want to improve the inequality \eqref{acg}:
\begin{equation*}
\begin{split}
	&\frac{d}{dt} \left(\|\nb u \|^2 + \|\nb v\|^2 + \|\nb w\|^2\right) + d_1 \|\Delta u\|^2 + d_2 \|\Delta v\|^2 + d_3 \|\Delta w\|^2 \\[7pt]
	+ &\, 6b \|u \nb u\|^2 + 2\|\nb v \|^2 + 2r \|\nb w \|^2 \\
	\leq &\,  \frac{4}{d_1} \|v\|^2 + \frac{4}{d_1}\| w \|^2  + \frac{2q^2}{d_3} \|u\|^2 +  \left(\frac{4a^2}{d_1} + \frac{2\beta^2}{d_2}\right) \|u\|^4_{L^4} \\
	+&\, \left(\frac{8J^2}{d_1} + \frac{4\alpha^2}{d_2} + \frac{4q^2c^2}{d_3}\right) |\gw| + \frac{8}{d_1} \|p_1 (t)\|^2  + \frac{4}{d_2} \|p_2 (t)\|^2 + \frac{4}{d_3} \|p_3(t)\|^2.
\end{split}
\end{equation*} 
Specifically we need to further treat the following term on the right-hand side of the above \eqref{acg},
$$
	 \left(\frac{4a^2}{d_1} +\frac{2\beta^2}{d_2}\right) \|u\|^4_{L^4}
$$
by using the Gagliardo-Nirenberg inequality \eqref{GN} for the interpolation spaces
$$
	L^1(\gw) \hookrightarrow L^2 (\gw) \hookrightarrow H^1 (\gw).
$$
It implies that there is a constant $C > 0$ and
\beq \bl{L12}
   \|u^2\|^2 \leq C \|\nb (u^2)\|^{6/5}\|u^2\|^{4/5}_{L^1}.
\eeq
because $-\frac{3}{2} =\theta (1 - \frac{3}{2}) - 3(1- \theta)$ with  $\theta = 3/5$ and $1 - \theta = 2/5$. Therefore, the inequality \eqref{L12} and the Young's inequality imply that there exists a constant $0 < \ve < b$ such that
\beq \bl{u4}
	\begin{split}
	 &\left(\frac{4a^2}{d_1} + \frac{2\beta^2}{d_2}\right)\|u\|^4_{L^4} = \left(\frac{4a^2}{d_1} + \frac{2\beta^2}{d_2}\right) \|u^2\|^2 \\[2pt]
	 \leq &\, C\left(\frac{4a^2}{d_1} + \frac{2\beta^2}{d_2}\right)\|\nb (u^2)\|^{6/5}\|u^2\|^{4/5}_{L^1}\leq \ve \|\nb u^2\|^2 + C_\ve \|u^2\|_{L^1}^2 \\
	 = &\, 4\ve \,\|u \nb u\|^2 +C_\ve \|u\|^4 \leq 4b \,\|u \nb u\|^2 +C_\ve \|u\|^4,
	\end{split}
\eeq
where $C_\ve > 0$ is a constant only depending on $\ve$. Substitute \eqref{u4} into the above inequality \eqref{acg} to obtain
\begin{equation} \bl{nbg}
\begin{split}
&\frac{d}{dt} \left(\|\nb u \|^2 + \|\nb v\|^2 + \|\nb w\|^2\right) + d\left( \|\Delta u\|^2 + \|\Delta v\|^2 + \|\Delta w\|^2 \right) \\[9pt]
+ &\, 2b\, \|u \nb u\|^2 + 2\|\nb v \|^2 + 2r \|\nb w \|^2 \leq G_2 (\|u\|^2 +\|v\|^2 + \| w \|^2) \\[7pt]
+&\, G_3 |\gw| + \frac{8}{d} \left(\|p_1 (t)\|^2  + \|p_2 (t)\|^2 + \|p_3(t)\|^2 \right) + C_\ve \|u\|^4, \quad t \geq \tau \in \mathbb{R},
\end{split}
\end{equation} 
where $d = \min \{d_1, d_2, d_3\}$, 
$$
	G_2 = \frac{1}{d} \max \{4, 2q^2\} \quad \text{and} \quad G_3 = \frac{1}{d} \, (8J^2 + 4\alpha^2 + 4q^2c^2).
$$
Then the inequality \eqref{nbg} with \eqref{gbd} infers that
\beq \bl{gG}
\begin{split}
	\frac{d}{dt} \|\nb (u, v, w) \|^2 &\; \leq  G_2 \|(u, v, w)\|^2 + G_3 |\gw| + \frac{8}{d} \,\|p(t)\|^2 + C_\ve \|u\|^4\\[3pt] 
	&\; \leq  G_1G_2 + G_3 |\gw| +  \frac{8}{d} \,\|p(t)\|^2 + C_\ve K_1^2
\end{split}
\eeq 
for any $t \geq \tau \in \mathbb{R}$. It follows that for any $0 \leq t \leq T_{M^*_E}$ we have
\beq \bl{gB}\
	\begin{split}
    &\|u(t+\tau)\|^2_{H^1 (\gw)} = \|u(t+\tau)\|^2 + \|\nb u(t+\tau)\|^2  \\[5pt]
    \leq &\, \|u(t+\tau)\|^2 + \|\nb u(\tau)||^2 + \int_\tau^{\tau + t} \left(G_1G_2 + G_3 |\gw| +  \frac{8}{d}\, \|p(s)\|^2 + C_\ve K_1^2\right) ds \\
    \leq &\, G_1 + \|\nb u(\tau)\|^2 + t(G_1G_2 + G_3 |\gw| + C_\ve K_1^2) + \frac{8}{d} \int_\tau^{t+\tau} \|p(s)\|^2\, ds \\
    \leq &\, G_1 + K_2 + T_{M^*_E}(G_1G_2 + G_3 |\gw| + C_\ve K_1^2) + \frac{8}{d} \left(T_{M^*_E} + 1\right) \|p\|^2_{L^2_b}.
    \end{split}
\eeq

Step 4. Finally we substitute \eqref{gbd} and \eqref{gB} into the inequality \eqref{mdest} for any two solutions $g(t)$ and $\tg (t)$ of the nonautonomous system \eqref{napb} with initial states in $M^*_E$. Then for any $t > 0$ and $\tau \in \mathbb{R}$, it holds that
\beq \bl{PEst}
\begin{split}
	&\|\Pi (t + \tau)\|_E \leq \frac{C_1}{\sqrt{t}}\, \|g_\tau - \tg_\tau\| + \int_{\tau}^{t + \tau} \frac{C_1}{\sqrt{t + \tau - s}} \,\| (f(g(s)) - f(\tg (s))\|\,ds \\
	\leq &\, \frac{C_1}{\sqrt{t}} \,\|g_\tau - \tg_\tau\| + \int_{\tau}^{t + \tau} \frac{C_1}{\sqrt{t + \tau - s}} \, G_1(12a^2 + 8\beta^2)\|u - \tu\|^2\, ds \\
	+ &\,\int_\tau^{t+\tau}  \frac{C_1}{\sqrt{t + \tau - s}}\, 30b^2 \rho \left(\|u\|^4_{H^1} + \|\tu\|^4_{H^1}\right)\|u - \tu\|^2\,ds \\
	+ &\, \int_{\tau}^{t + \tau} \frac{C_1}{\sqrt{t + \tau - s}} \max \{2q, 5, 3+ 2r\}(\|u -\tu\|^2 + \|v -\tv \|^2 + \|w-\tw\|^2)\,ds \\
	\leq &\, \frac{C_1}{\sqrt{t}} \,\|g_\tau - \tg_\tau\| + \int_{\tau}^{t + \tau} \frac{C_1}{\sqrt{t + \tau - s}}\, G_p \,\|g(s) - \tg (s)\|^2\, ds \\[2pt]
	\leq &\, \frac{C_1}{\sqrt{t}} \,\|g_\tau - \tg_\tau\| + \int_{\tau}^{t + \tau} \frac{C_1}{\sqrt{t + \tau - s}}\, G_p \, e^{C_*(s-\tau)} \,\|g_\tau - \tg_\tau\|^2\, ds \\[2pt]
	\leq &\, \frac{C_1}{\sqrt{t}} \, \|g_\tau - \tg_\tau\| + \int_{\tau}^{t + \tau} \frac{C_1}{\sqrt{t + \tau - s}} \, G_p \, e^{C_*(s-\tau)} 2\sqrt{G_1} \|g_\tau - \tg_\tau\|\, ds 
\end{split}
\eeq
where we used \eqref{PiH} and \eqref{gbd} in the last two steps, and the positive constant $G_p$ is given by
\beq \bl{Gp}
	\begin{split}
	G_p = &\, G_1(12a^2 + 8\beta^2)+  \max \{2q, 5, 3+ 2r\} \\
	+ &\, 60 b^2 \rho \left[ G_1 + K_2 + T_{M^*_E}(G_1G_2 + G_3 |\gw|) + \frac{8}{d} (T_{M^*_E} + 1)\|p\|^2_{L^2_b}\right]^2 
	\end{split}
\eeq
which depends on the nonautonomous terms $p_i (t, x), i = 1, 2, 3$, and the permanently entering time $T_{M^*_E}$. Integrating the inequality \eqref{PEst} on the time interval $[\tau, \tau + T_{M^*_E}]$, here without of generality $T_{M^*_E} > 0$,  we then obtain the result that
\beq \bl{PiE}
	\begin{split}
	&\|S(\tau + T_{M^*_E}, \tau) g_\tau - S(\tau + T_{M^*_E},\tau) \tg_\tau \|_E = \|\Pi (\tau + T_{M^*_E})\|_E   \\
	\leq &\, C_1 \left(\frac{1}{\sqrt{T_{M^*_E}}} + 4\sqrt{G_1 T_{M^*_E}}\, \exp \left\{{C_* T_{M^*_E}}\right\} G_p \right) \|g_\tau - \tg_\tau\|
	\end{split}
\eeq
for any $g_\tau, \tg_\tau \in M^*_E$ and any $\tau \in\mathbb{R}$. Therefore, \eqref{lpHe} and then \eqref{lpH} are proved with the uniform Lipschitz constant
$$
	\kappa = C_1 \left(\frac{1}{\sqrt{T_{M^*_E}}} + 4\sqrt{G_1 T_{M^*_E}}\, \exp \left\{C_* T_{M^*_E}\right\} G_p \right).
$$ 
The proof of this theorem is completed.
\end{proof}

After the challenging Theorem \ref{LpHE} has been proved, now we can prove the second main result of this paper.

\begin{theorem} \bl{pbexA}
	For the nonautonomous Hindmarsh-Rose process $\{S(t, \tau)_{t \geq \tau \in \mathbb{R}}$ generated by the nonautonomous Hindmarsh-Rose equations \eqref{uq}-\eqref{wq}, there exists a pullback exponential attractor $\mathscr{M} = \{\mathscr{M}(\tau)\}_{\tau\in \mathbb{R}}$ in the space H.
\end{theorem}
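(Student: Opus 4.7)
The plan is to apply Proposition \ref{PeA} with $X = H$ and $Y = E$, noting that the embedding $E = [H^1(\gw)]^3 \hookrightarrow [L^2(\gw)]^3 = H$ is compact by the Rellich--Kondrachov theorem since $\gw \subset \mathbb{R}^n$ ($n \leq 3$) is bounded with Lipschitz boundary. The task reduces to verifying the three hypotheses of that proposition, two of which are essentially done.

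First, condition 1) on the existence of a time-invariant pullback absorbing set in $Y = E$, uniformly in $\tau$, is precisely the content of Lemma \ref{naac}: the set $M^*_E = \{g \in E : \|g\|_E^2 \leq K_2\}$ absorbs every bounded $B \subset H$ after time $T_B + 1$, with $K_2$ and the entering time independent of $\tau \in \mathbb{R}$. Second, condition 2), the smoothing Lipschitz continuity from $H$ into $E$ over the absorbing set, is precisely the statement of Theorem \ref{LpHE}, with Lipschitz constant $\kappa$ uniform in $\tau$.

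Thus the only remaining task is condition 3), the H\"older/Lipschitz continuity in time in the $H$-norm on $M^*_E$ over the time window $[0, 2T_{M^*_E}]$. For this I would use the mild-solution representation
\[
    S(\tau + t_2, \tau) g_\tau - S(\tau + t_1, \tau) g_\tau = (e^{A(t_2 - t_1)} - I)\, g(\tau + t_1) + \int_{\tau + t_1}^{\tau + t_2} e^{A(\tau + t_2 - s)} [f(g(s)) + p(s)]\, ds,
\]
for $g_\tau \in M^*_E$ and $0 \leq t_1 \leq t_2 \leq 2T_{M^*_E}$. For the first term, the standard analytic-semigroup estimate $\|(e^{At} - I)x\|_H \leq C t^{1/2} \|x\|_E$ (valid since $(-A)^{1/2}$ is defined on $E$ under Neumann boundary conditions) combined with the uniform bound $\|g(\tau + t_1)\|_E^2 \leq K_2$ from Lemma \ref{naac} gives a contribution bounded by $C \sqrt{K_2}\, |t_2 - t_1|^{1/2}$. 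For the integral term, since $\|e^{A\cdot}\|_{\mathcal{L}(H)} \leq 1$, it suffices to bound $\|f(g(s))\|$ and handle $\|p(s)\|$. The nonlinear part is uniformly bounded in $H$ on $M^*_E$ via the embedding $H^1 \hookrightarrow L^6$ and the explicit form \eqref{opf}, while the forcing is handled by Cauchy--Schwarz and the translation-boundedness assumption \eqref{tbd}:
\[
    \int_{\tau + t_1}^{\tau + t_2} \|p(s)\|\, ds \leq |t_2 - t_1|^{1/2} \Bigl( \int_{\tau + t_1}^{\tau + t_2} \|p(s)\|^2 ds \Bigr)^{1/2} \leq |t_2 - t_1|^{1/2} \sqrt{2 T_{M^*_E} + 1}\, \|p\|_{L_b^2}.
\]
Combining these three contributions yields \eqref{Lpt1} and \eqref{Lpt2} with exponents $\gamma_1 = \gamma_2 = 1/2$ and constants $c_1, c_2$ depending only on $M^*_E$, $T_{M^*_E}$, and $\|p\|_{L_b^2}$, all of which are $\tau$-independent.

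The main obstacle I anticipate is purely book-keeping in condition 3): one must verify that the constants produced really are independent of $\tau$, which relies on translation boundedness of $p$ together with the $\tau$-independence of both $K_2$ and $T_{M^*_E}$ already established. With all three hypotheses verified, Proposition \ref{PeA} delivers the pullback exponential attractor $\mathscr{M} = \{\mathscr{M}(\tau)\}_{\tau \in \mathbb{R}}$ in $H$.
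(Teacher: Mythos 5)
Your overall strategy coincides with the paper's: both invoke Proposition \ref{PeA} with $X = H$ and $Y = E$, take condition 1) from Lemma \ref{naac} and condition 2) from Theorem \ref{LpHE}, and reduce the remaining work to the temporal continuity condition 3), verified through the mild-solution formula with H\"older exponent $1/2$. The gap is in your treatment of condition 3). The quantities required by \eqref{Lpt1}--\eqref{Lpt2} are \emph{pullback} differences such as $S(\tau,\tau-t_1)g - S(\tau,\tau-t_2)g$: the same final time $\tau$ but two different initial times. What you actually estimate is the \emph{forward} difference $S(\tau+t_2,\tau)g_\tau - S(\tau+t_1,\tau)g_\tau$: one trajectory, one initial time, two final times. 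For a nonautonomous equation these are genuinely different objects and cannot be identified by a time shift, since $p(t,x)$ is not translation invariant; so the concluding sentence ``combining these three contributions yields \eqref{Lpt1} and \eqref{Lpt2}'' does not follow as written.

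The gap is fixable with tools you already have. By the cocycle property, $S(\tau,\tau-t_2)g = S(\tau,\tau-t_1)\,S(\tau-t_1,\tau-t_2)g$, hence
\begin{equation*}
\|S(\tau,\tau-t_1)g - S(\tau,\tau-t_2)g\| \leq e^{\frac{1}{2}C_* t_1}\,\|g - S(\tau-t_1,\tau-t_2)g\|
\end{equation*}
by the $H$-Lipschitz estimate \eqref{piq}--\eqref{PiH}, and the remaining factor is exactly the short-time forward continuity you do control (your estimate with $t_1 = 0$ over an interval of length $t_2 - t_1$, starting from data in the absorbed set so the $E$-bound $K_2$ applies to the semigroup term). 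This costs an additional factor $e^{C_* T_{M^*_E}}$ in the constants $c_1, c_2$, which remain $\tau$-independent. The paper instead estimates the pullback difference directly, comparing the two trajectories $g^1(t) = S(t,\tau-t_1)g_0$ and $g^2(t) = S(t,\tau-t_2)g_0$ and applying the mild formula only on the short overlap interval $[\tau-t_2,\tau-t_1]$; the two routes are equivalent in substance. One point in your favor: your semigroup estimate $\|(e^{At}-I)x\| \leq C t^{1/2}\|x\|_E$ is the cleanly justified fractional-power bound, whereas the paper's \eqref{LH} asserts a Lipschitz rate $|h|\,\|g_0\|$ in the bare $H$-norm, which strictly speaking requires $g_0 \in D(A)$.
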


\begin{proof}
	We can apply Proposition \ref{PeA} to prove this theorem. Indeed Lemma \ref{naac} and Theorem \ref{LpHE} have shown that the first two conditions in that Proposition \ref{PeA} are satisfied with the pullback absorbing set $M^* = M^*_E$ in \eqref{ab} by the nonautonomous Hindmarsh-Rose process $S(t, \tau)_{t \geq \tau \in \mathbb{R}}$. Thus it suffices to show that the third condition of \eqref{Lpt1} and \eqref{Lpt2} in Proposition \ref{PeA} is satisfied. 
	
        Recall that the Hindmarsh-Rose process $S(t, \tau)$ is defined by \eqref{CP} and let $g(t, \tau, g_\tau)$ be the weak solution to the initial value problem of the nonautonomous Hindmarsh-Rose evolutionary equation \eqref{napb}. For any $t_1 < t_2$ with $|t_1 - t_2| \leq I$, where $I$ is any given positive constant, we can estimate the $H$-norm of the difference of two pullback solution trajectories 
$$
	g^1 (t) = S(t, \tau - t_1)g_0 \quad \text{and} \quad g^2 (t) = S(t, \tau - t_2)g_0, \quad 0 \leq t_1 \leq t_2, \; g_0 \in H, 
$$
as follows. 

Using the notation in \eqref{vpeq} but here $\Pi (t) = g^1 (t) - g^2 (t)$. Then $\Pi (t)$ is the solution of the initial value problem
\begin{equation} \bl{Steq}
	\begin{split}
	\frac{d\Pi}{dt} &\, = A\Pi + f(g^1) -f(g^2), \quad t \geq \tau - t_1 \in \mathbb{R}, \\[3pt]
	&\Pi (\tau - t_1) = g_0 - S(\tau - t_1, \tau - t_2)g_0.	
	\end{split}
\end{equation}
By \eqref{piq}, we have
\beq \bl{Stpt}
	\begin{split}
	\frac{d\|\Pi\|^2}{dt} \leq C_* \|\Pi \|^2, \quad t \geq \tau,
	\end{split}
\eeq
where $C_*$ is the same constant as in \eqref{piq}. 

The Lipschitz and H\"{o}lder continuity associated with the regularity property of the parabolic $C_0$-semigroup of contraction $\{e^{At}\}_{t \geq 0}$, cf. \cite{SY}, gives rise to
\beq \bl{LH}
	\|e^{A(t_0 + h)} g_0 - e^{At_0} g_0\| \leq \|e^{At_0} \| \|e^{Ah} g_0 - g_0\| \leq C_0 |h| \|g_0\|, \quad \text{for all} \;\; t_0 \geq 0,
\eeq
where $C_0 > 0$ is a constant depending only on the contraction operator semigroup $e^{At}$. Then, it follows from \eqref{Stpt} and \eqref{LH} that 

\beq \bl{St12}
	\begin{split}
	&\|S(t, \tau - t_1) g_0 - S(t, \tau - t_2)g_0\| = \|g^1 (t, \tau - t_1, g_0) - g^2 (t, \tau - t_2, g_0)\| \\[2pt]
	= &\, \left\|e^{A(t - (\tau - t_1))}g_0 + \int_{\tau - t_1}^t e^{A(t-s)} [f(g^1 (s, \tau - t_1, g_0)) + p(s, x)]\, ds \right.  \\
	- &\, \left. e^{A(t - (\tau - t_2))}g_0 - \int_{\tau - t_2}^t e^{A(t-s)} [f(g^2 (s, \tau - t_2, g_0)) + p(s, x)]\, ds \right\| \\
	\leq &\, \|\Pi (t, \tau - t_1, \Pi (\tau - t_1)\| \leq e^{\frac{1}{2}C_* |t - (\tau - t_1)|} \|\Pi (\tau - t_1, \tau - t_2, g_0) \| \\[6pt]
	\leq &\, e^{\frac{1}{2}C_* |t - (\tau - t_1)|} \, \|e^{A(t_2 - t_1)}g_0 -  g_0)\|  \\[2pt]
	+ &\, e^{\frac{1}{2}C_* |t - (\tau - t_1)|} \int_{\tau - t_2}^{\tau - t_1} \| e^{A(\tau - t_1-s)} [f(g^2 (s, \tau - t_2, g_0)) + p(s, x)]\|\, ds  \\
	\leq &\, e^{\frac{1}{2}C_* |t - (\tau - t_1)|} \, C_0 \, |t_1 - t_2| \|g_0\| \\
	+ &\, e^{\frac{1}{2}C_* |t - (\tau - t_1)|}  \int_{\tau - t_2}^{\tau - t_1} \|e^{A(t-s)} [f(g^2 (s, \tau - t_2, g_0)) + p(s, x)]\|\, ds.
	\end{split}
\eeq
Denote by $T^* = T_{M^*_E} > 0$, which is the finite time when all the pullback solution trajectories started from the pullback absorbing set $M_E^*$ in Lemma \ref{naac} permanently enter into itself. Define the following set, where the closure is taken in the space $E$,
\beq \bl{GB}
	\Gamma = \overline{\bigcup_{0 \leq t \leq T^*} S(\tau, \tau - t) M_E^*}
\eeq
Lemma \ref{naac} demonstrated that $M_E^*$ and $T^*$ are independent of $\tau \in \mathbb{R}$ and $t \geq 0$. Denote by $D_\Gamma =  \max_{g \in \Gamma} \|f(g) \|_H$, since the Nemytskii operator $f: E \to H$ is bounded on the bounded set $\Gamma$ in $E$. Here $\|e^{At}\|_{\mathcal{L}(H)} \leq 1$ and by H\"{o}lder inequality, 
\beq \bl{fg2}
	\begin{split}
	 &\, \int_{\tau - t_2}^{\tau - t_1} \|e^{A(t-s)}\|_{\mathcal{L}(H)} (\| f(g^2 (s, \tau - t_2, g_0)) \| + \| p(s, x)\|)\, ds \\
	 \leq &\, (D_\Gamma + K_2) |t_1 - t_2| + \int_{\tau - t_2}^{\tau - t_1} \| p(s, \cdot )\|\, ds \\
	 \leq &\, (D_\Gamma + K_2) |t_1 - t_2| + |t_1 - t_2|^{1/2} \sqrt{\int_{\tau - t_2}^{\tau - t_1} \|p(s, \cdot )\|^2\, ds}    \\
	 \leq &\, (D_\Gamma + K_2) |t_1 - t_2| + |t_1 - t_2|^{1/2} \sqrt{(|t_1 - t_2| +1) \, \Sigma_{i=1}^3 \|p_i \|^2_{L_b^2}} \\[2pt]
	 \leq &\, (D_\Gamma + K_2) |t_1 - t_2| + |t_1 - t_2|^{1/2} ( |t_1 - t_2|^{1/2} + 1) \|p \|_{L_b^2} \\[6pt]
	 \leq &\,( |t_1 - t_2| + |t_1 - t_2|^{1/2}) (D_\Gamma + K_2 + \|p \|_{L_b^2}), 
	\end{split}
\eeq
for any $t_1 \geq T^*$ and $g_0 \in M_E^*$, where $K_2$ is given in \eqref{acB}. 

Substituting \eqref{fg2} into \eqref{St12} we obtain
\beq  \bl{G12}
	\begin{split}
	&\| S(t, \tau - t_1) g_0 - S(t, \tau - t_2) g_0\| =  \|g^1 (t, \tau - t_1, g_0) - g^2 (t, \tau - t_2, g_0) \| \\[5pt]
	\leq &\, e^{\frac{1}{2}C_* |t - (\tau - t_1)|} C_0 \, |t_1 - t_2| \|g_0\| \\[5pt]
	 +  &\,  e^{\frac{1}{2}C_* |t - (\tau - t_1)|} ( |t_1 - t_2| + |t_1 - t_2|^{1/2}) (D_\Gamma + K_2 + \|p \|_{L_b^2}) \\[5pt]
	\leq &\, \lambda (M_E^*)\, e^{\frac{1}{2}C_* |t - (\tau - t_1)|}  |t_1 - t_2|^\gamma, \quad \text{for} \;\, t \geq \tau - t_1,  \; t_1 \geq T^*, \; g_0 \in M_E^*.
	\end{split}
\eeq	
where
$$
	\lambda (M_E^*) = C_0 K_2 + 2(D_\Gamma + K_2 + \|p \|_{L^2_b})
$$
and 
$$
	\gamma = \begin{cases}
			\frac{1}{2}, &\text{if $|t_1 - t_2| < 1$;} \\[5pt]
			1, &\text{if $|t_1 - t_2| \geq 1$.}
	 		\end{cases}
$$

For any given $\tau \in \mathbb{R}$, in the above inequality \eqref{G12} take 
$$
	t = \tau, \quad t_1 = T_{M^*_E}, \quad \text{and} \quad t_2 = T_{M^*_E} + t \quad  \text{for} \; \;t \in [0, \,T^*_{M^*_E}].
$$
Then we obtain
\beq \bl{C59}
	\sup_{\tau \in \mathbb{R}} \, \|S(\tau, \tau - T_{M^*_E}) g_0 - S(\tau, \tau - T_{M^*_E} - t )g_0\| \leq \lambda (M_E^*)\, \exp \left\{\frac{C_*}{2}\,T_{M^*_E}\right\} | t |^\gamma 
\eeq
for $t \in [0, T_{M^*_E}], \, g_0 \in M^*_E$. It shows that the Lipschitz condition \eqref{Lpt1} with $M^* = M^*_E$ in Proposition \ref{PeA} is satisfied. Moreover, for any given $\tau \in \mathbb{R}$, take $t = \tau$ and $t_1, t_2 \in [T_{M^*_E}, 2T_{M^*_E}]$ in \eqref{G12}, we see that 
\beq \bl{f510}
	\|S(\tau, \tau - t_1) g_0 - S(\tau, \tau - t_2)g_0\| \leq \lambda (M_E^*)\, \exp \left\{\frac{C_*}{2}\,T_{M^*_E}\right\}\, |t_1 - t_2|^\gamma
\eeq
for any $g_0 \in M^*_E$. It shows that the Lipschitz condition \eqref{Lpt2} with $M^* = M^*_E$ is also satisfied by the nonautonomouss Hindmarsh-Rose process. According to Proposition \ref{PeA}, there exists a pullback exponential attractor  $\mathscr{M} = \{\mathscr{M}(\tau)\}_{\tau\in \mathbb{R}}$ in the space $H$. The proof of this theorem is completed.
\end{proof}

\bibliographystyle{amsplain}

\begin{thebibliography}{99}

\bibitem{BB}
R. Bertram, M.J. Butte, T. Kiemel and A. Sherman, \emph{Topological and phenomenological classification of bursting oscillations}, Bulletin of Mathematical Biology, \textbf{57} (1995), 413--439.

\bibitem{BRS}
R.J. Buters, J. Rinzel and J.C. Smith, \emph{Models respiratory rhythm generation in the pre-B\"{o}tzinger complex, I. Bursting pacemaker neurons}, J. Neurophysiology, \textbf{81} (1999), 382--397.

\bibitem{CCLF}
T. Caraballo, A.N. Carvalho, J.A. Langa and L.F. Rivero, \emph{Existence of pullback attractors for pullback asymptotically compact processes}, Nonlinear Analysis: Theory, Methods and Applications, \textbf{72} (2010), 1967--1976.

\bibitem{CLR1}
T. Caraballo, G. Lukaszewicz and J. Real, \emph{Pullback attractors for asymptotically compact nonautonomous dynamical systems}, Nonlinear Analysis, \textbf{64} (2006), 484--498.

\bibitem{CLR2}
A.N. Carvalho, J.A. Langa and J.C. Robinson, \emph{Attractors for Infinite-Dimensional Non-autonomous Dynamical Systems}, Springer, New York, 2013.

\bibitem{CK}
T.R. Chay and J. Keizer, \emph{Minimal model for membrane oscillations in the pancreatic beta-cell}, Biophysiology Journal, \textbf{42} (1983), 181--189.

\bibitem{CV}
V. V. Chepyzhov and M. I. Vishik, \emph{Attractors for Equations of Mathematical Physics},  AMS Colloquium Publications, Vol. \textbf{49}, AMS, Providence, RI, 2002.

\bibitem{CS}
L.N. Cornelisse, W.J. Scheenen, W.J. Koopman, E.W. Roubos and S.C. Gielen, \emph{Minimal model for intracellular calcium oscillations and electrical bursting in melanotrope cells of Xenopus Laevis}, Neural Computations, \textbf{13} (2000), 113--137.

\bibitem{CE1} 
R. Czaja and M. Efendiev, \emph{Pullback exponential attractors for nonautonomous equations, Part I: Semilinear parabolic problems}, J. Math. Anal. Appl., (2011), doi:10.1016/j.jmaa.2011.03.052.

\bibitem{CE2} 
R. Czaja and M. Efendiev, \emph{Pullback exponential attractors for nonautonomous equations, Part II: Applications to reaction-diffusion systems}, J. Math. Anal. Appl., (2011), doi:10.1016/j.jmaa.2011.03.053.

\bibitem{DJ}
M. Dhamala, V.K. Jirsa and M. Ding, \emph{Transitions to synchrony in coupled bursting neurons}, Physical Review Letters, \textbf{92} (2004), 028101.

\bibitem{Eden}
A. Eden, C. Foias, B. Nicolaenco and R. Temam, \emph{Exponential Attractors for Dissipative Evolution Equations}, John Wiley \& Sons, New York, 1994.

\bibitem{EMZ}
M. Efendiev, A. Miranville and S. Zelik, \emph{Exponential attractors for a nonlinear reaction-diffusion system in $\mathbb{R}^3$}, C.R. Acad. Sci. Paris, S\'{e}r. I Math., \textbf{330} (2000), 713--718.

\bibitem{EYY}
M. Efendiev, Y. Yamamoto and A. Yagi, \emph{Exponential attractors for non-autonomous dissipative systems}, J. Math. Soc. Japan, \textbf{63} (2011), 647--673.

\bibitem{ET}
G.B. Ementrout and D.H. Terman, \emph{Mathematical Foundations of Neurosciences}, Springer, 2010. 

\bibitem{FH}
R. FitzHugh, \emph{Impulses and physiological states in theoretical models of nerve membrane}, Biophysical Journal, \textbf{1} (1961), 445--466.

\bibitem{HR1}
J.L. Hindmarsh and R.M. Rose, \emph{A model of the nerve impulse using two first-order differential equations}, Nature,  \textbf{206} (1982), 162--164.

\bibitem{HR2}
J.L. Hindmarsh and R.M. Rose, \emph{A model of neuronal bursting using three coupled first-order differential equations}, Proceedings of the Royal Society London, Ser. B: Biological Sciences,  \textbf{221} (1984), 87--102.

\bibitem{HH}
A. Hodgkin and A. Huxley, \emph{A quantitative description of membrane current and its application to conduction and excitation in nerve}, J. Physiology, Ser. B,  \textbf{117} (1952), 500--544.

\bibitem{IG}
G. Innocenti and R. Genesio, \emph{On the dynamics of chaotic spiking-bursting transition in the Hindmarsh-Rose neuron}, Chaos, \textbf{19} (2009), 023124.

\bibitem{EI}
E.M. Izhikecich, \emph{Dynamical Systems in Neuroscience: The Geometry of Excitability and Bursting}, MIT Press, Cambridge, Massachusetts, 2007.

\bibitem{Kl}
P.E. Kloeden, \emph{Pullback attractors of nonautonomous semidynamical systems}, Stochastics and Dynamics, \textbf{3} (2003), 101--112.

\bibitem{Ko}
C. Koch, \emph{Biophysics of Computation: Information Processing in Single Neurons}, Oxford University Press, London and New York, 1999.

\bibitem{LMR}
J.A. Langa, A. Miranville and J. Real, \emph{Pullback exponential attractors}, Discrete and Continuous Dynamical Systems, \textbf{26} (2010), 1329--1357.

\bibitem{Li}
H. Li, \emph{Uniform exponential attractors for non-autonomous strongly damped wave equations}, J. Applied Mathematics and Physics, \textbf{2} (2014), 783--794.

\bibitem{LWZ}
H. Li, X. Wei and Y. Zhang, \emph{The existence of weak $\mathcal{D}$-pullback exponential attractor for nonautonomous dynamical system}, Scientific World Journal, \textbf{2016} (2016), 1871602.

\bibitem{MFL}
S.Q. Ma, Z. Feng and Q. Lu, \emph{Dynamics and double Hopf bifurcations of the Rose-Hindmarsh model with time delay}, International Journal of Bifurcation and Chaos, \textbf{19} (2009), 3733--3751.

\bibitem{Milani}
A.J. Milani and N.J. Koksch, \emph{An Introduction to Semiflows}, Chapman \& Hall/CRC Press, Boca Raton, 2005.

\bibitem{Phan}
C. Phan, \emph{Random attractor for stochastic Hindmarsh-Rose equations with multiplicative noise}, arXiv:1908.01220, 2019.

\bibitem{PYS}
C. Phan, Y. You and J. Su, \emph{Global attractors for Hindmarsh-Rose equations in neurodynamics}, arXiv:1907.13225, 2019.

\bibitem{Ri}
J. Rinzel, \emph{A formal classification of bursting mechanism in excitable systems}, Proceedings of International Congress of Mathematics, \textbf{1} (1987), 1578--1593.

\bibitem{Rb}
J. C. Robinson, "Infinite Dimensional Dynamical Systems", Cambridge University Press, Cambridge, UK, (2001).

\bibitem{Ru}
J. Rubin, \emph{Bursting induced by excitatory synaptic coupling in nonidentical conditional relaxation oscillators or square-wave bursters}, Physics Review E, \textbf{74} (2006), 021917. 

\bibitem{Rv}
N.F. Rulkov, \emph{Regularization of synchronized chaotic bursts}, Physical Review Letters, \textbf{86} (2001), 183--186.

\bibitem{SY} 
G. R. Sell and Y. You, \emph{Dynamics of Evolutionary Equations}, Applied Mathematical Sciences, Volume \textbf{143}, Springer, New York, 2002.

\bibitem{SC}
A. Shapiro, R. Curtu, J. Rinzel and N. Rubin, \emph{Dynamical characteristics common to neuronal competition models}, J. Neurophysiology, \textbf{97} (2007), 462--473.

\bibitem{SR}
A. Sherman and J. Rinzel, \emph{Rhythmogenetic effects of weak electrotonic coupling in neuronal models}, Proceedings of National Academy of Sciences, \textbf{89} (1992), 2471--2474.

\bibitem{SK}
D. Somers and N. Kopell, \emph{Rapid synchronization through fast threshold modulation}, Biological Cybernetics, \textbf{68} (1993), 393--407.

\bibitem{SPH}
J. Su, H. Perez-Gonzalez and M. He, \emph{Regular bursting emerging from coupled chaotic neurons}, Discrete and Continuous Dynamical Systems, Supplement 2007, 946--955.

\bibitem{Tm}
R. Temam, "Infinite Dimensional Dynamical Systems in Mechanics and Physics", 2nd edition, Springer, New York, (2013).

\bibitem{Tr}
D. Terman, \emph{Chaotic spikes arising from a model of bursting in excitable membrane}, J. Appl. Math., \textbf{51} (1991), 1418--1450.
  
\bibitem{WS}
Z.L. Wang and X.R. Shi, \emph{Chaotic bursting lag synchronization of Hindmarsh-Rose system via a single controller}, Applied Mathematics and Computation, \textbf{215} (2009), 1091-1097.

\bibitem{Yagi}
A. Yagi, \emph{Exponential attractors for competing species model with cross-diffusions}, Discrete and Continuous Dynamical Systems, Ser. A, \textbf{22} (2008), 1091--1020.
 
\bibitem{Y08}
Y. You, \emph{Global attractor of Gray-Scott equations}, Comm. Pure Appl. Anal., \textbf{7} (2008), 947--970.

\bibitem{Y10}
Y. You, \emph{Dynamics of three-component reversible Gray-Scott model}, Discrete and Continuous Dynamical Systems, Ser. B, \textbf{14} (2010), 1671--1688.

\bibitem{Y12}
Y. You, \emph{Global dynamics and robustness of reversible autocatalytic reaction-diffusion systems}, Nonlinear Analysis, Series A, \textbf{75} (2012), 3049--3071.

\bibitem{Su}
F. Zhang, A. Lubbe, Q. Lu and J. Su, \emph{On bursting solutions near chaotic regimes in a neuron model}, Discrete and Continuous Dynamical Systems, Ser. S,  \textbf{7} (2014), 1363--1383.

\bibitem{ZZL}
J. Zhou, X. Zhu, J. Liu, Y. Zhai and Z. Wang, \emph{Tracking the state of the Hindmarsh-Rose neuron by using the Coullet chaotic system based on a single input}, Journal of Information and Computing Science,\textbf{11} (2016), 083--092.

\bibitem{ZZX}
S. Zhou, M. Zhao and X. Xiang, \emph{Pullback and uniform exponential attractor for non-autonomous Boissonade system}, (in Chinese), Scientia Sinica Mathematica, \textbf{47} (2017), 1891--1906.

\end{thebibliography}

\end{document}